\newtheorem{thm}{Theorem}[section]
\newtheorem{lem}{Lemma}[section]
\theoremstyle{definition}
\newtheorem{zau}{Remark}[section]
\DeclareMathOperator{\pr}{\mathsf P}
\DeclareMathOperator{\M}{\mathsf E}
\newcommand{\R}{\mathbb{R}}
\begin{document}
\begin{frontmatter}

\title{Asymptotic behavior of homogeneous additive functionals of the
solutions of It\^{o}
stochastic differential equations with nonregular dependence on parameter}

%
%
%

\author{\inits{G.}\fnm{Grigorij}\snm{Kulinich}}\email{zag\_mat@univ.kiev.ua}
\address{ Taras Shevchenko National University of Kyiv, 64/13,\break
Volodymyrska Street, 01601, Kyiv, Ukraine}

\author{\inits{S.}\fnm{Svitlana}\snm{Kushnirenko}\corref{cor1}}\email{bksv@univ.kiev.ua}
\cortext[cor1]{Corresponding author.}

\author{\inits{Yu.}\fnm{Yuliia}\snm{Mishura}}\email{myus@univ.kiev.ua}

\markboth{G. Kulinich et al.}{Asymptotic behavior of homogeneous additive functionals}

\begin{abstract}
We study the asymptotic behavior of mixed functionals of the form
$I_T(t)=\break F_T(\xi_T(t))+\int_{0}^{t} g_T(\xi_T(s))\,d\xi_T(s)$, $
t\ge0$, as $T\to\infty$. Here $\xi_T(t)$ is a strong solution of the
stochastic differential equation $d\xi_T (t)=a_T(\xi_T(t))\,
dt+dW_T(t)$, $T>0$ is a parameter, $a_T= a_T(x)$ are measurable
functions such that $\left|a_T(x)\right|\leq C_T$ for all $x\in\R$,
$W_T(t)$ are standard Wiener processes, $ F_T= F_T(x)$, $x\in\R$, are
continuous functions, $ g_T= g_T(x)$, $x\in\R$, are locally bounded
functions, and everything is real-valued. The explicit form of the
limiting processes for $I_T(t)$ is established under very nonregular
dependence of $ g_T $ and $a_T $ on the parameter $T$.
\end{abstract}

%
\begin{keyword}
Diffusion-type processes\sep asymptotic behavior of additive
functionals\sep nonregular dependence on the parameter
\MSC[2010] 60H10\sep 60J60
\end{keyword}

\received{28 May 2016}
\revised{17 June 2016}
\accepted{17 June 2016}
\publishedonline{4 July 2016}

\end{frontmatter}

\section{Introduction}

Consider the It\^{o} stochastic differential equation
\begin{equation}\label{o1}
d\xi_T (t)=a_T\bigl(\xi_T(t)\bigr)\,dt+dW_T(t),\quad t\ge0,\;\xi_T (0)=x_0,
\end{equation}
where $T>0$ is a parameter, $a_T (x)$, $x\in\R$, are real-valued
measurable functions such that for some constants $L_T>0$ and for all
$x\in\mathbb{R}$ $\left|a_T(x)\right|\leq L_T$, and $W_T=\{W_T(t),
t\geq0\}$, $T>0$, is a family of standard Wiener processes defined on
a complete probability space $(\varOmega,\Im, \pr)$.

It is known from Theorem 4 in \cite{paper1} that, for any $T>0$ and
$x_0\in\R$, equation (\ref{o1}) possesses a unique strong pathwise
solution $\xi_T=\{\xi_T(t), t\geq0\}$, and this solution is a
homogeneous strong Markov process.


We suppose that the drift coefficient $a_T (x)$ in equation
(\ref{o1}) can have a very nonregular dependence on the
parameter. For example, the drift coefficient can be of
``$\delta$''-type sequence at some points $x_k$ as $T\to
\infty$, or it can be equal to $\sqrt{T}\sin
((x-x_k)\sqrt{T})$, or it can have degeneracies of
some other types. Such a nonregular dependence of the
coefficients in equation (\ref{o1}) first appeared in
\cite{paper3} and \cite{paper4}, where the limit behavior of the
normalized unstable solution of It\^{o} stochastic differential
equation as $t\to\infty$ was investigated. In those papers, a
special dependence of the coefficients $a_T
(x)=\sqrt{T}a(x\sqrt{T})$ on the parameter $T$ was considered in
the case where $a(x)$ is an absolutely integrable function on $\mathbb
{R}$. Assume that this is the case and let $\int_{\mathbb{R}}
a(x)\,dx=\lambda$. The sufficiency of the condition $\lambda=0$ for
the asymptotic equivalence of distributions $\xi_T$ and $W_{T}(t)$ is
established in \cite{paper3}, and the necessity of this
condition is proved in \cite{paper45}. If $\lambda\neq0$,
then we can deduce from \cite{paper4} that the
distributions of the solution $\xi_T$ of equation (\ref{o1}) weakly
converge as $T\to\infty$ to the corresponding distributions of
the Markov process $\hat{\xi}(t)=l\left(\zeta(t)\right)$, where
$l(x)=c_1x$ for $x>0$ and $l(x)=c_2x$ for $x\leq0$; $\zeta(t)$ is a
strong solution of the It\^{o} equation $d\zeta(t)=\bar
\sigma(\zeta(t))\,d W(t)$, where $\bar\sigma(x)=\sigma_1$ for
$x>0$ and $\bar\sigma(x)=\sigma_2$ for $x<0$, and
$\int_{0}^{t}P\{|\zeta(s)|=0\}\,ds=0$. The explicit form of
the transition density of the process $\hat{\xi}(t)$ is obtained.
Moreover, in \cite{paper5}, it is proved that
\[
\hat{\xi}(t)=x_0+\beta(t)+W(t),
\]
where $\beta(t)$ is a certain
functional of $\zeta(t)$, and the necessity of the condition $\lambda
\neq0$ is established for the weak convergence as $T\to\infty$ of
the solution $\xi_T$ of equation (\ref{o1}) to the process
$\hat{\xi}(t)$.

Furthermore, in \cite{paper3} and \cite{paper4}, a probabilistic method
to study the ``awkward'' term $\sqrt{T}\int_{0}^{t}a(\xi
_T(s)\sqrt{T})\,ds$ in equation (\ref{o1}) is developed. This method
uses a representation of this ``awkward'' term through a family of
continuous functions $\varPhi_T (x)$ of $\xi_T(t)$ and a family of
martingales $\int_{0}^{t}\varPhi'_T(\xi_T(s))\,dW_T(s)$, with the
further application of the It\^{o} formula. After the mentioned
transformations, according to this method, we can apply Skorokhod's
convergent subsequence principle for $\xi_{T_n}(t)$ and $W_{T_n}(t)$
(see \cite{book6}, Chapter I, \S6) in order to pass to the limit in
the resulting representation.

Note that this method is also used in the present paper to study the
asymptotic behavior of integral functionals.

It is known from \cite{book2}, \S16, that the asymptotic behavior of
the solution $\xi_T$ of equation (\ref{o1}) is closely related to the
asymptotic behavior of harmonic functions, that is, functions
satisfying the following ordinary differential equation almost
everywhere (a.e.) with respect to the Lebesgue measure:
\[
f_T'(x)a_T(x)+\frac{1}{2}\,f_T''(x)=0.
\]

It is obvious that the functions $f_T(x)$ have the form
\begin{equation}\label{o2}
f_T(x)=c_T^{(1)}\int_{0}^{x}\exp\Bigg\{-2\int
_{0}^{u}a_T(v)\,dv\Bigg\}\,du+c_T^{(2)},
\end{equation}
where $c_T^{(1)}$ and $c_T^{(2)}$ are some families of constants.

The latter functions possess the continuous derivatives
$f_T^{\prime}(x)$, and their second derivatives $f_T^{\prime\prime}(x)$ exist
almost everywhere with respect to the Lebesgue measure and
are locally integrable. Note that $c_T^{(1)}$ are normalizing
constants and $c_T^{(2)}$ are centralizing constants in the limit
theorems (see \cite{book7}, \S6). Further, for simplicity, we
assume that in (\ref{o2}), $c_T^{(1)}\equiv1$ and $c_T^{(2)}\equiv
0$.

In this paper, we assume for the coefficient $a_T(x)$ of equation
(\ref{o1}) that there exists a family of functions $G_T(x)$,
$x\in\R$, with continuous derivatives $G_T^{\prime}(x)$ and locally
integrable second derivatives $G_T^{\prime\prime}(x)$ a.e. with respect
to the Lebesgue measure such that, for all $T>0$ and $x\in\R$, the
following inequalities hold:
\begin{align*}
(A_1) \hspace*{27pt}\quad&\biggl( G_T^{\prime}(x)a_T(x)+\frac{1}{2}\,G_T^{\prime\prime}(x)
\biggr)^2+\bigl( G_T^{\prime}(x)\bigr)^2 \leq C \bigl(1+
\bigl(G_T(x)\bigr)^2\bigr),\hspace*{27pt}\\
&\quad\big|G_T(x_0)\big|\leq C.
\end{align*}

Suppose additionally that the functions $G_T(x)$, $x\in\R$, introduced
by condition $(A_1)$ satisfy the following assumptions:
\begin{itemize}
\item[$(i)$] There exist constants $C>0$ and $\alpha>0$ such that
$|G_T(x)|\geq C |x|^{\alpha}$.
\item[$(ii)$] There exist a bounded function $\psi\left(|x|\right)$
and a constant $m\geq0$ such that\break $\psi\left(|x|\right)\to0$ as
$|x|\to0$ and, for all $x\in\R$ and $T>0$ and for any measurable
bounded set $B$, the following inequality holds:
\begin{align*}
(A_2) \hspace*{15pt}\quad\quad\int_{0}^{x} f_T'(u) \Biggl( \,\int
_{0}^{u}\frac{\chi_B\left(G_T(v)\right)}{f'_T(v)}\,dv\Biggr)\,du
\leq\psi\bigl(\lambda(B)\bigr) \left[1+ |x|^m\right],\hspace*{15pt}
\end{align*}
where $\chi_B(v)$ is the indicator function of a set $B$, $\lambda
(B)$ is the Lebesgue measure of $B$, and $f_T'(x)$ is the derivative of
the function $f_T(x)$ defined by equality (\ref{o2}).
\end{itemize}

Let $\left\{G_T \right\}$ be the class of the functions $G_T(x)$, $x\in
\R$, satisfying conditions $(A_1)$ and (i)--(ii). The class of
equations of the form (\ref{o1}) whose coefficients $a_T(x)$ admit
$G_T(x)$, $x\in\R$, from the class $\left\{G_T \right\}$ will be
denoted by $K\left(G_T \right)$. It is easy to understand that class
$K\left(G_T \right)$ does not depend on the constants $c_T^{(1)}$ and
$c_T^{(2)}$ in representation (\ref{o2}).

It is clear that if there exist constants $\delta> 0$ and $C>0$ such
that $0<\delta\leq f_T'(x)\leq C$ for all $x\in\R$, $T>0$, then
the corresponding equations (\ref{o1}) belong to the class $K\left(G_T
\right)$ for $G_T(x)=f_T(x)$. We denote this subclass as $K_1$.
Note that the class $K\left(G_T \right)$ contains in particular
the equations for which, at some points $x_k$, we have the
convergence $f_T'(x_k)\to\infty$ or the convergence $f_T'(x_k)\to
0$ as $T\to\infty$. For example, consider equation (\ref{o1})
with $a_T(x)=\frac{c_0Tx}{1+x^2T}$. It is easy to obtain that
$f_T'(x)=\frac{1}{\left(1+x^2T\right)^{c_0}}$, and if
$c_0>-\frac{1}{2}$, then such equations belong to the class $K\left(G_T
\right)$ with $G_T(x)=x^2$ (here, at points $x\neq0$, we have
$f_T'(x)\to0$ for $c_0>0$, $f_T'(x)\to\infty$ for
$-\frac{1}{2}<c_0<0$, and $f_T'(x)\equiv1$ for $c_0=0$).\vadjust{\eject}

For the class of equations $K\left(G_T \right)$, we study the
asymptotic behavior as $T\to\infty$ of the distributions of the
following functionals:
\begin{align*}
&\beta_T^{(1)} (t)=\int_{0}^{t} g_T\bigl(\xi_T(s)\bigr)\,ds,\qquad \beta
_T^{(2)} (t)=\int_{0}^{t} g_T\bigl(\xi_T(s)\bigr)\,dW_T(s),\\
&I_T(t)=F_T\bigl(\xi_T(t)\bigr)+\int_{0}^{t} g_T\bigl(\xi_T(s)\bigr)\,dW_T(s),\qquad
\beta_T (t)=\int_{0}^{t} g_T\bigl(\xi_T(s)\bigr)\,d\xi_T(s),
\end{align*}
where the processes $\xi_T(t)$, $W_T(t)$ are related via \xch{equation~(\ref
{o1})}{Eq.~(\ref
{o1})}, $ g_T (x) $ is a family of measurable locally bounded
real-valued functions, and $ F_T (x)$ is a family of continuous
real-valued functions.

This paper is a continuation of \cite{paper77,paper88,paper8}. Note
that the behavior of the distributions of functionals $\beta_T^{(1)}
(t)$, $\beta_T^{(2)} (t)$ for the solutions $\xi_T$ of equations (\ref
{o1}) from the class $K_1$ is studied in \cite{paper55} and \cite
{paper56}. The case where $W_T(t)$ is replaced with $\eta_T(t)$, where
$\eta_T(t)$ is a family of continuous martingales with the
characteristics $\langle{\eta_T}\rangle(t)\to t$ as $T\to\infty$, was
studied in \cite{paper57}. Paper \cite{paper58} was devoted to a
discrete analogue of the results from \cite{paper57}. A similar problem
for the functionals $I_T(t)$ in the case of equation (\ref{o1}) with
$a_T (x)\equiv0$ was considered in \cite{book7} and in \cite{paper11},
for the class $K_1$.
In \cite{paper77,paper88,paper8}, the behavior of the distributions of
the functionals $\beta_T^{(1)} (t)$, $\beta_T^{(2)} (t)$, and $I_T(t)$
with a special dependence of the drift coefficients $a_T (x)=\sqrt
{T}a(x\sqrt{T})$ on the parameter $T$ is considered, mainly in the case
where $\left|xa(x)\right|\leq C$ for all $x\in\mathbb{R}$. The behavior
of the distributions of functionals $\beta_T^{(1)} (t)$ was studied in
\cite{paper77}, $\beta_T^{(2)} (t)$ was studied in \cite{paper88}, and
$I_T(t)$ was investigated in \cite{paper8}. A more detailed review of
the known results in this area is presented in \cite{paper77,paper88,paper8}. Note that the functionals $\beta_T^{(1)} (t)$, $\beta_T^{(2)}
(t)$, and $\beta_T (t)$ are particular cases of the functional $I_T(t)$
(see \cite{paper8}, Lemma 4.1).

\begin{zau}\label{z3}

In this paper, we often apply the It\^{o} formula to the process $\varPhi
(\xi_T(t))$, where $\xi_T(t)$ is a solution of equation (\ref{o1}), the
derivative $\varPhi'(x)$ of the function
$\varPhi(x)$ is assumed to be continuous, and the second derivative $\varPhi
''(x)$ is assumed to exist a.e.\  with respect to the Lebesgue measure
and to be locally integrable. Then it follows from \cite{paper9} that
with probability one, for all $t\geq0$, the following equality holds:
\begin{align*}
\varPhi\bigl(\xi_T(t)\bigr)-\varPhi(x_0)&= \int_{0}^{t}\biggl(\varPhi
'\bigl(\xi_T(s)\bigr)a_T\bigl(\xi_T(s)\bigr)+\frac{1}{2}\,\varPhi''\bigl(\xi_T(s)\bigr)\biggr)\,ds\\
&\quad+\int
_{0}^{t}\varPhi'\bigl(\xi_T(s)\bigr)\,dW_T(s).
\end{align*}
\end{zau}

\begin{zau}\label{zZ3} Let $\xi_T$ be a solution of equation (\ref
{o1}), and $G_T(x)$ be a family of functions satisfying condition $(A_1)$.
Theorem 1 from \cite{paper10} implies that the family of
the processes $\{\zeta_T(t)=G_T(\xi_T(t)), t\geq0\}$ is weakly
compact. The proof of this result is based on the equality
\begin{equation}\label{o4}
\zeta_T(t)=G_T(x_0)+\int_{0}^{t}\biggl( G_T^{\prime}\bigl(\xi_T(s)\bigr)a_T\bigl(\xi
_T(s)\bigr)+\frac{1}{2}\,G_T^{\prime\prime}\bigl(\xi_T(s)\bigr) \biggr)\,ds+\eta_T(t),
\end{equation}
where
\[
\eta_T(t)=\int_{0}^{t}G_T^{\prime}\bigl(\xi_T(s)\bigr)\,dW_T(s),\qquad\zeta
_T (t)=G_T\bigl(\xi_T(t)\bigr).
\]
In turn, the latter equality follows from Remark \ref{z3}.
In addition, it is established in the proof of Theorem 1 from \cite
{paper10} that for any constants $L>0$ and $\varepsilon>0$,
\begin{align}\label{o5}
\begin{split}
\mathop{\lim}\limits_{N\to\infty}\overline{\mathop{\lim}\limits
_{T \to\infty}} \mathop{\sup}\limits_{0\leq t\leq L} \pr\bigl\{
\big|\lambda_T(t)\big|>N\bigr\}&=0, \\
\mathop{\lim}\limits_{h\to0}\overline{\mathop{\lim}\limits_{T \to
\infty}} \mathop{\sup}\limits_{|t_1-t_2|\leq h;\, t_i\leq L} \pr
\bigl\{\big|\lambda_T(t_2)-\lambda_T(t_1)\big|>\varepsilon\bigr\}&=0,
\end{split}
\end{align}
and that, for any $k>1$ and for certain constants $C_k$ and $C$,
\begin{equation}\label{o6}
\M\mathop{\sup}\limits_{0\leq t\leq L} \big|\lambda_T(t)\big|^k\leq C_k,\qquad
\M\big|\lambda_T(t_2)-\lambda_T(t_1)\big|^4\leq C|t_2-t_1|^2,
\end{equation}
where $\lambda=\eta$ or $\lambda=\zeta$ (see \cite{book2}, \S6,
Theorem 4).
\end{zau}

\begin{zau}\label{z1}
Here and throughout the paper, the weak convergence of the processes
means the weak convergence in the uniform topology of the space of
continuous functions $C[0,L]$ for any $L>0$. The processes that have
continuous trajectories with probability 1 will be simply called continuous.
\end{zau}

The paper is organized as follows. Section~\ref{section2} contains the statements of
the main results. In Section~\ref{section3}, they are proved. Auxiliary results are
collected in Section~\ref{section4}.

\section{Statement of the main results}\label{section2}

In what follows, we denote by $C,\,L,\,N,\,C_N$ any constants that do
not depend on $T$ and $x$. Assume that, for certain locally bounded
functions $ q_T (x) $ and any constant $N>0$, the following condition holds:
\[
(A_3)\hspace*{40pt}\quad\quad\quad\quad\lim\limits_{T\to\infty}\mathop{\sup}\limits
_{|x|\leq N} f_T'(x) \Biggl| \,\int_{0}^{x}\frac
{q_T(v)}{f'_T(v)}\,dv\Biggr|=0,\hspace*{80pt}
\]
where $f_T'(x)$ is the derivative of the function $f_T(x)$ defined by
Eq.~(\ref{o2}).

\begin{thm}\label{th2}
Let $\xi_T$ be a solution of Eq.~\eqref{o1} from the class
$K\left(G_T \right)$ and $G_T(x_0)\to y_0$ as $T\to\infty$.
Assume that there exist measurable locally bounded functions
$a_0(x)$ and $\sigma_0(x)$ such that:
\begin{itemize}
\item[\rm1.] the functions
\[
q_T^{(1)}(x)=G_T^{\prime}(x)a_T(x)+\frac{1}{2}\,G_T^{\prime\prime}(x)- a_0\bigl(G_T(x)\bigr)
\]
and
\[
q_T^{(2)}(x)=\bigl(G_T^{\prime}(x)\bigr)^2- \sigma_0^2\bigl(G_T(x)\bigr)
\]

satisfy assumption $(A_3)$;

\item[\rm2.] the It\^{o} equation
\begin{equation}\label{o3}
\zeta(t)=y_0+\int_{0}^{t}a_0\bigl(\zeta(s)\bigr)\,ds+\int
_{0}^{t}\sigma_0\bigl(\zeta(s)\bigr)\,d\hat{W}(s)
\end{equation}
has a unique weak solution.
\end{itemize}
Then the stochastic process $\zeta_T(t)=G_T(\xi_T(t))$ weakly
converges, as $T\to\infty$, to the solution $\zeta(t)$ of Eq.~\eqref{o3}.\vadjust{\eject}
\end{thm}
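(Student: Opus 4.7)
The plan is to read off the semimartingale decomposition of $\zeta_T(t)=G_T(\xi_T(t))$ from Remark~\ref{zZ3}, replace the drift and the quadratic variation of the martingale part by the coefficients $a_0$ and $\sigma_0^2$ of equation~(\ref{o3}) up to remainders involving $q_T^{(1)}$ and $q_T^{(2)}$, show via $(A_3)$ that these remainders vanish, and finally combine the weak compactness of $\{\zeta_T\}$ with the uniqueness of the weak solution of~(\ref{o3}) to identify the limit. Concretely, (\ref{o4}) together with the definitions of $q_T^{(1)}$ and $q_T^{(2)}$ gives
\[
\zeta_T(t)=G_T(x_0)+\int_0^t a_0(\zeta_T(s))\,ds+R_T(t)+\eta_T(t),
\]
where $R_T(t)=\int_0^t q_T^{(1)}(\xi_T(s))\,ds$ and $\eta_T$ is a continuous martingale with $\langle\eta_T\rangle(t)=\int_0^t\sigma_0^2(\zeta_T(s))\,ds+\int_0^t q_T^{(2)}(\xi_T(s))\,ds$.

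The central technical step is to prove that, for any locally bounded $q_T$ satisfying $(A_3)$, $\sup_{0\le t\le L}|\int_0^t q_T(\xi_T(s))\,ds|\to0$ in probability. First I would solve the inhomogeneous harmonic equation $h_T'(x)a_T(x)+\frac{1}{2}h_T''(x)=q_T(x)$ explicitly in the form
\[
h_T(x)=2\int_0^x f_T'(u)\int_0^u\frac{q_T(v)}{f_T'(v)}\,dv\,du,\qquad h_T'(x)=2f_T'(x)\int_0^x\frac{q_T(v)}{f_T'(v)}\,dv,
\]
and apply the It\^o formula of Remark~\ref{z3} to obtain
\[
\int_0^t q_T(\xi_T(s))\,ds=h_T(\xi_T(t))-h_T(x_0)-\int_0^t h_T'(\xi_T(s))\,dW_T(s).
\]
Condition $(A_3)$ is exactly the statement that $\sup_{|x|\le N}|h_T'(x)|/2\to 0$, which together with $h_T(0)=0$ yields $\sup_{|x|\le N}|h_T(x)|\to 0$. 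Tightness of $\xi_T$ on $[0,L]$ — inherited from the tightness of $\zeta_T$ (Remark~\ref{zZ3}) together with the lower bound $(i)$ giving $|\xi_T(s)|\le(|\zeta_T(s)|/C)^{1/\alpha}$ — lets one localize to $\{\sup_{s\le L}|\xi_T(s)|\le N\}$, on which the boundary term and the stochastic integral (whose quadratic variation is $\int_0^t(h_T'(\xi_T(s)))^2\,ds$) both vanish in probability. Applied to $q_T^{(1)}$ this kills $R_T$; applied to $q_T^{(2)}$ it reduces $\langle\eta_T\rangle(t)$ to $\int_0^t\sigma_0^2(\zeta_T(s))\,ds$ in the limit.

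With the decomposition reduced to $\zeta_T(t)=G_T(x_0)+\int_0^t a_0(\zeta_T(s))\,ds+M_T(t)$ up to terms $o(1)$ in probability, and $\langle M_T\rangle(t)=\int_0^t\sigma_0^2(\zeta_T(s))\,ds+o(1)$, I would extract a weakly convergent subsequence of $(\zeta_T,\eta_T,W_T)$ via Remark~\ref{zZ3} and pass to an almost surely convergent version using Skorokhod's convergent subsequence principle (\cite{book6}, Ch.~I, \S6). Taking limits in the decomposition and in the quadratic variation identifies every subsequential limit $\zeta$ as a weak solution of~(\ref{o3}) starting from $y_0$. Hypothesis~2 then forces all subsequential limits to coincide, yielding weak convergence of the full family $\zeta_T\Rightarrow\zeta$.

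The hard part, I expect, will be the limit passage inside the integrals $\int_0^t a_0(\zeta_{T_n}(s))\,ds$ and $\int_0^t\sigma_0^2(\zeta_{T_n}(s))\,ds$, because $a_0$ and $\sigma_0$ are only measurable and locally bounded, so neither continuity nor dominated convergence applies directly after Skorokhod's representation. This is exactly the situation where the seemingly redundant assumption $(A_2)$ earns its keep: it furnishes occupation-time / Krylov-type bounds (to be proved in Section~\ref{section4}) on integrals of indicators $\chi_B(G_T(\xi_T(s)))$, from which convergence of bounded measurable functionals of $\zeta_T$ can be deduced by a standard approximation of $a_0,\sigma_0^2$ by continuous functions. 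Once this is in place, the martingale problem for~(\ref{o3}) is satisfied by every subsequential limit and uniqueness finishes the proof.
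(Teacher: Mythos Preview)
Your proposal is correct and follows essentially the same route as the paper: the same semimartingale decomposition via (\ref{o4}), the same auxiliary function $h_T$ (the paper's $\varPhi_T$ in Lemma~\ref{lm2}) to kill the $q_T^{(k)}$-remainders under $(A_3)$, Skorokhod's subsequence principle, and identification of the limit via weak uniqueness. You have also correctly anticipated that the limit passage in $\int_0^t a_0(\zeta_{T_n}(s))\,ds$ and $\int_0^t\sigma_0^2(\zeta_{T_n}(s))\,ds$ is the delicate point and that $(A_2)$ supplies the occupation-time estimate (the paper's Lemma~\ref{lm1}) needed for the approximation argument (Lemma~\ref{lm3}).
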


\begin{thm}\label{th3}
Let $\xi_T$ be a solution of Eq.~\eqref{o1} from the class $K\left(G_T
\right)$, and let the assumptions of Theorem \emph{\ref{th2}} hold. Assume
that, for measurable locally bounded functions $g_T(x)$, there exists a
measurable locally bounded function $g_0(x)$ such that the function
\[
q_T(x)=g_T(x)- g_0\bigr(G_T(x)\bigl)
\]
satisfies assumption $(A_3)$. Then the stochastic process $ \beta
_T^{(1)} (t)=\int_{0}^{t} g_T(\xi_T(s))\,ds$ weakly converges,
as $T\to\infty$, to the process
\[
\beta^{(1)} (t)=\int_{0}^{t} g_0\bigl(\zeta(s)\bigr)\,ds,
\]
\noindent where $\zeta(t)$ is a solution of Eq.~\eqref{o3}.
\end{thm}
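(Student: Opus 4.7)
The plan is to split $\beta_T^{(1)}(t)$ into a main term driven by $\zeta_T = G_T(\xi_T)$ and a negligible ``error'' term controlled by $(A_3)$. Writing
\[
\beta_T^{(1)}(t) = \int_0^t g_0\bigl(\zeta_T(s)\bigr)\,ds + \int_0^t q_T\bigl(\xi_T(s)\bigr)\,ds,
\]
I would show that the first integral converges weakly to $\int_0^t g_0(\zeta(s))\,ds$ via Theorem \ref{th2}, while the second vanishes in probability uniformly on $[0,L]$.

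For the error term, the key tool is an auxiliary function in the spirit of the ``probabilistic method'' described in the introduction. Set
\[
\varPhi_T(x) = 2\int_0^x f_T'(u)\int_0^u \frac{q_T(v)}{f_T'(v)}\,dv\,du,
\]
so that $\varPhi_T'(x) = 2 f_T'(x)\int_0^x q_T(v)/f_T'(v)\,dv$ is continuous and, using the a.e.\ identity $(f_T'(x))' = -2a_T(x) f_T'(x)$, a direct computation gives
\[
\varPhi_T'(x)a_T(x) + \tfrac12\,\varPhi_T''(x) = q_T(x) \quad \text{a.e.}
\]
By Remark \ref{z3} (Itô's formula applied to $\varPhi_T(\xi_T(t))$),
\[
\int_0^t q_T\bigl(\xi_T(s)\bigr)\,ds = \varPhi_T\bigl(\xi_T(t)\bigr) - \varPhi_T(x_0) - \int_0^t \varPhi_T'\bigl(\xi_T(s)\bigr)\,dW_T(s).
\]
Condition $(A_3)$ states precisely that $\sup_{|x|\le N}|\varPhi_T'(x)|\to 0$ for every $N$, and integrating gives $\sup_{|x|\le N}|\varPhi_T(x)|\le N\sup_{|u|\le N}|\varPhi_T'(u)|\to 0$. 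To transfer these pointwise bounds to $\xi_T$, I would use the tightness bound \eqref{o5}--\eqref{o6} for $\zeta_T$ together with condition (i), $|G_T(x)|\ge C|x|^\alpha$, which implies $\pr\{\sup_{s\le L}|\xi_T(s)|>N\} \le \pr\{\sup_{s\le L}|\zeta_T(s)|\ge CN^\alpha\}$ and thus gives a localisation $\sup_T \pr\{\sup_{s\le L}|\xi_T(s)|>N\}\to 0$ as $N\to\infty$. On the event $\{\sup_{s\le L}|\xi_T(s)|\le N\}$ both $\sup_{t\le L}|\varPhi_T(\xi_T(t))|$ and the quadratic variation $\int_0^L (\varPhi_T'(\xi_T(s)))^2\,ds$ of the martingale are dominated by the vanishing sup in $(A_3)$, so the error term tends to zero in probability uniformly on $[0,L]$.

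For the main term, Theorem \ref{th2} yields $\zeta_T\Rightarrow \zeta$ in $C[0,L]$. I would appeal to Skorokhod's representation to realise this convergence uniformly on $[0,L]$ almost surely along a subsequence, combined with the localisation above so that $\zeta_{T_n}$ stays in a compact set on which $g_0$ is bounded. Then $\int_0^t g_0(\zeta_{T_n}(s))\,ds\to\int_0^t g_0(\zeta(s))\,ds$ follows by dominated convergence at points where $g_0$ is continuous along $\zeta(\cdot)$; the nondegeneracy of the weak solution of \eqref{o3} ensures the occupation measure of $\zeta$ is absolutely continuous, so this continuity set has full Lebesgue measure almost surely.

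The main obstacle, I expect, is the first step: the estimate on the error term relies on simultaneous control of $\varPhi_T$ on compacts (from $(A_3)$) \emph{and} of the exit probability of $\xi_T$ from compacts. This is where condition (i) of the class $\{G_T\}$ and the weak compactness furnished by Remark \ref{zZ3} both have to be used in tandem; without (i) a small $\varPhi_T$ on a compact $x$-region would say nothing, because $\xi_T$ itself is not directly known to be tight. A secondary delicate point is the passage to the limit inside the integral for merely measurable $g_0$, which forces reliance on the nondegeneracy of equation \eqref{o3}.
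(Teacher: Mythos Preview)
Your treatment of the error term $\int_0^t q_T(\xi_T(s))\,ds$ is correct and coincides with the paper's Lemma~\ref{lm2}: the auxiliary $\varPhi_T$, the It\^o representation, the localisation via condition~(i) and \eqref{o5} yielding \eqref{o19}, and the use of $(A_3)$ to kill both the drift and martingale parts are exactly what the paper does.

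The gap is in the main term. Your argument that ``$g_0$ is continuous at $\zeta(s)$ for a.e.\ $s$ because the occupation measure of $\zeta$ is absolutely continuous'' does not work: the set of discontinuities of a merely measurable locally bounded $g_0$ can have positive, even full, Lebesgue measure (take $g_0=\chi_E$ for a fat Cantor set, or $\chi_{\mathbb Q}$). Absolute continuity of the law of $\zeta(s)$ therefore does not force $\zeta(s)$ to land in the continuity set of $g_0$. You also invoke a nondegeneracy of \eqref{o3} that is nowhere assumed; $\sigma_0$ may vanish.

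What is actually needed, and what the paper supplies through Lemma~\ref{lm1} and Lemma~\ref{lm3}, is a \emph{uniform-in-$T$} occupation-time estimate for the pre-limit processes $\zeta_T$, not just for the limit $\zeta$. This is exactly where condition $(A_2)$ enters, which you never use: it gives
\[
\int_0^L \pr\{\zeta_T(s)\in B\}\,ds \le C_L\,\psi(\lambda(B))
\]
for any Borel $B\subset[-N,N]$, uniformly in $T$. With this in hand one runs Luzin's theorem: approximate $g_0$ by a continuous $g^\delta$ agreeing with $g_0$ off a set $B^\delta$ with $\lambda(B^\delta)<\delta$, handle the continuous part by dominated convergence, and bound the remainder by $C_N\int_0^L\pr\{\zeta_T(s)\in B^\delta\}\,ds\le C_N C_L\psi(\delta)$ uniformly in $T$. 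That is the content of Lemma~\ref{lm3}, and it closes the argument without any nondegeneracy hypothesis on \eqref{o3}.
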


\begin{thm}\label{th4}
Let $\xi_T$ be a solution of Eq.~\eqref{o1} from the class $K\left(G_T
\right)$, and let the assumptions of Theorem \emph{\ref{th2}} hold.
Assume that, for measurable locally bounded functions $g_T(x)$, there
exists a measurable locally bounded function $g_0(x)$ such that
\[
(A_4)\hspace*{15pt}\quad\quad\quad\lim\limits_{T\to\infty}\mathop{\sup}\limits
_{|x|\leq N} \Biggl| f_T'(x) \,\int_{0}^{x}\frac
{g_T(v)}{f'_T(v)}\,dv-g_0\bigl(G_T(x)\bigr)G_T^{\prime}(x)\Biggr|=0\hspace*{39pt}
\]
for all $N>0$. Then the stochastic process $ \beta_T^{(1)} (t)=\int
_{0}^{t} g_T(\xi_T(s))\,ds$ weakly converges, as $T\to\infty$,
to the process
\[
\tilde{\beta}^{(1)} (t)=2\Biggl(\int_{y_0}^{\zeta(t)} g_0(x)\,
dx-\int_{0}^{t} g_0\bigl(\zeta(s)\bigr)\,\sigma_0\bigl(\zeta(s)\bigr)\,d\hat
{W}(s)\Biggr),
\]
where $\zeta(t)$ and the Wiener process $\hat{W}(t)$ are related via
Eq.~\eqref{o3}.
\end{thm}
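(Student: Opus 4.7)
The plan is to apply the It\^{o}-formula reduction from \cite{paper3,paper4} recalled in the introduction. Define
\[
\varPhi_T(x) = 2\int_0^x f_T'(u)\int_0^u \frac{g_T(v)}{f_T'(v)}\,dv\,du.
\]
Since $f_T''(x) = -2a_T(x)f_T'(x)$ a.e., one checks that $\varPhi_T'(x) = 2f_T'(x)\int_0^x g_T(v)/f_T'(v)\,dv$ is continuous, $\varPhi_T''(x) = -2a_T(x)\varPhi_T'(x) + 2g_T(x)$ is locally integrable, and $\varPhi_T'(x)a_T(x) + \frac{1}{2}\varPhi_T''(x) = g_T(x)$ a.e. Remark~\ref{z3} then yields the key identity
\[
\beta_T^{(1)}(t) = \bigl[\varPhi_T(\xi_T(t)) - \varPhi_T(x_0)\bigr] - \int_0^t \varPhi_T'(\xi_T(s))\,dW_T(s).
\]

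The content of assumption $(A_4)$ is precisely that $\frac{1}{2}\varPhi_T'(x) - g_0(G_T(x))G_T'(x)\to 0$ uniformly on every compact. Integrating and changing variables $y = G_T(u)$ in the resulting approximation,
\[
\varPhi_T(\xi_T(t)) - \varPhi_T(x_0) = 2\int_{G_T(x_0)}^{\zeta_T(t)} g_0(y)\,dy + r_T(t),
\]
with $\sup_{t\le L}|r_T(t)|\to 0$ in probability. The uniform-on-compacts reduction is legitimate because condition $(i)$ forces $|\xi_T(s)|\le(|\zeta_T(s)|/C)^{1/\alpha}$ and the family $\{\zeta_T\}$ is tight by Remark~\ref{zZ3}. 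Since $G_T(x_0)\to y_0$ and $\zeta_T \Rightarrow \zeta$ by Theorem~\ref{th2}, the boundary term converges weakly to $2\int_{y_0}^{\zeta(t)} g_0(y)\,dy$.

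For the stochastic integral I would use Skorokhod's convergent subsequence principle (\cite{book6}, Ch.~I, \S6): along a subsequence one can realize $(\xi_{T_n},W_{T_n},\zeta_{T_n})$ on one probability space so that $\zeta_{T_n}\to\zeta$ and $W_{T_n}\to\hat W$ uniformly on compacts a.s. On a truncation event $\{\sup_{s\le L}|\xi_{T_n}(s)|\le N\}$, which has probability at least $1-\varepsilon$ uniformly in $n$ for $N$ large, $(A_4)$ permits one to replace $\varPhi_{T_n}'(\xi_{T_n}(s))$ by $2 g_0(\zeta_{T_n}(s))\,G_{T_n}'(\xi_{T_n}(s))$ with negligible $L^2$-error. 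The quadratic variation of the approximating martingale is
\[
\int_0^t 4g_0^2(\zeta_{T_n}(s))\bigl(G_{T_n}'(\xi_{T_n}(s))\bigr)^2 ds = \int_0^t 4g_0^2(\zeta_{T_n}(s))\sigma_0^2(\zeta_{T_n}(s))\,ds + o(1),
\]
the $o(1)$ coming from the $(A_3)$-condition on $q_T^{(2)}$ from assumption~1 of Theorem~\ref{th2}. This converges a.s.\ to $\int_0^t 4g_0^2(\zeta(s))\sigma_0^2(\zeta(s))\,ds$ and, combined with the joint convergence to $\hat W$, identifies the martingale limit as $2\int_0^t g_0(\zeta(s))\sigma_0(\zeta(s))\,d\hat W(s)$. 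Uniqueness in law of $\zeta$ pins the limit down independently of the subsequence and yields weak convergence of the full family.

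The main obstacle will be the passage to the limit in the stochastic integral: because $\varPhi_{T_n}'$ simultaneously depends on $T$ and is controlled only on compact $x$-sets through $(A_4)$, no pointwise or dominated-convergence argument is available. The remedy is the truncation plus quadratic-variation identification sketched above, which is the essence of the method of \cite{paper3,paper4,paper8}, and it is precisely this that forces Theorem~\ref{th4} to inherit the full set of hypotheses of Theorem~\ref{th2}.
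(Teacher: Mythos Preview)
Your decomposition via $\varPhi_T$ and the It\^o identity is exactly the paper's starting point, and your treatment of the boundary term and of the error $\hat q_T(x)=\tfrac12\varPhi_T'(x)-g_0(G_T(x))G_T'(x)$ under $(A_4)$ with the truncation $\{|\xi_T|\le N\}$ matches the paper. The difference---and the gap---is in the passage to the limit in the stochastic integral.

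You propose to identify the limit of $\int_0^t 2g_0(\zeta_{T_n}(s))G_{T_n}'(\xi_{T_n}(s))\,dW_{T_n}(s)$ by computing its quadratic variation and invoking ``joint convergence to $\hat W$''. Neither piece does what you need. First, on the Skorokhod space the processes $W_{T_n}$ need not converge to the specific $\hat W$ appearing in \eqref{o3}; that Wiener process is constructed \emph{a posteriori} from the limiting martingale $\tilde\eta$ via the representation \eqref{o14}, not obtained as a limit of the $W_{T_n}$. Second, knowing only that the limit martingale $M$ satisfies $\langle M\rangle(t)=\int_0^t 4g_0^2(\zeta)\sigma_0^2(\zeta)\,ds$ does not determine the \emph{joint} law of $(M,\zeta)$, which is what fixes the law of $\tilde\beta^{(1)}$. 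And you cannot simply pass to the limit in the integrand either, because $(A_3)$ for $q_T^{(2)}$ controls integrals of $(G_T')^2$, not pointwise behaviour of $G_{T_n}'(\xi_{T_n}(s))$.

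The paper's remedy is a one-line observation that you have essentially already written down: since $d\eta_T=G_T'(\xi_T)\,dW_T$, the integral in question is literally $2\int_0^t g_0(\zeta_T(s))\,d\eta_T(s)$. On the Skorokhod space one has $\tilde\eta_{T_n}\to\tilde\eta$ uniformly in probability (this is part of the proof of Theorem~\ref{th2}), and Lemma~\ref{lm4} then gives $\int_0^t g_0(\tilde\zeta_{T_n})\,d\tilde\eta_{T_n}\to\int_0^t g_0(\tilde\zeta)\,d\tilde\eta$. Because $\tilde\eta=\int\sigma_0(\tilde\zeta)\,d\hat W$ by \eqref{o14}, the limit is exactly $\int_0^t g_0(\tilde\zeta)\sigma_0(\tilde\zeta)\,d\hat W$, now with the \emph{correct} $\hat W$ tied to $\zeta$ through \eqref{o3}. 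Rewriting the stochastic integral with respect to $\eta_T$ rather than $W_T$ is the step that makes the identification go through.
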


\begin{thm}\label{th5}
Let $\xi_T$ be a solution of equation \emph{(\ref{o1})} from the class $K\left
(G_T \right)$, and let the assumptions of Theorem \emph{\ref{th2}} hold.
Assume that, for measurable locally bounded functions $g_T(x)$, there
exists a measurable locally bounded function $g_0(x)$ such that
the function
\[
q_T(x)=\bigl(g_T(x)- g_0\bigl(G_T(x)\bigr)G_T^{\prime}(x)\bigr)^2
\]
satisfies assumption $(A_3)$. Then the stochastic process
$\beta_T^{(2)} (t)\,{=}\int_{0}^{t} g_T(\xi_T(s))\,dW_T(s)$,
where $\xi_T(t)$ and $W_T(t)$ are related via Eq.~\eqref{o1}, weakly
converges, as $T\to\infty$, to the process
\[
{\beta}^{(2)} (t)=\int_{0}^{t} g_0\bigl(\zeta(s)\bigr)\,d\zeta(s)-\int
_{0}^{t} g_0\bigl(\zeta(s)\bigr)\,a_0\bigl(\zeta(s)\bigr)\,ds,
\]
where $\zeta(t)$ is a solution of Eq.~\eqref{o3}.
\end{thm}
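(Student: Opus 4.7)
The plan is to decompose $\beta_T^{(2)}$ into a negligible martingale piece and a main piece built from the martingale $\eta_T$ introduced in Remark~\ref{zZ3}, and then to pass to the limit using Theorem~\ref{th2} together with Skorokhod's convergent subsequence principle. Adding and subtracting $g_0(\zeta_T(s))G_T'(\xi_T(s))$ inside the It\^o integral, write $\beta_T^{(2)}(t)=M_T(t)+N_T(t)$, where
\[
M_T(t)=\int_{0}^{t}\bigl(g_T(\xi_T(s))-g_0(\zeta_T(s))G_T'(\xi_T(s))\bigr)\,dW_T(s),
\]
\[
N_T(t)=\int_{0}^{t}g_0(\zeta_T(s))G_T'(\xi_T(s))\,dW_T(s)=\int_{0}^{t}g_0(\zeta_T(s))\,d\eta_T(s),
\]
with $\zeta_T(s)=G_T(\xi_T(s))$.

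The first term $M_T$ is a continuous martingale whose quadratic variation equals $\int_{0}^{t}q_T(\xi_T(s))\,ds$. Since $q_T$ satisfies $(A_3)$ by hypothesis, Theorem~\ref{th3} applied to $q_T$ with target function $g_0\equiv 0$ yields $\langle M_T\rangle_t\to 0$ in probability, uniformly on $[0,L]$; Doob's maximal inequality (after a standard truncation on $\{\langle M_T\rangle_L\le N\}$) then gives $\sup_{t\le L}|M_T(t)|\to 0$ in probability. For $N_T$ I use formula (\ref{o4}): since $G_T'(x)a_T(x)+\frac{1}{2}G_T''(x)=a_0(G_T(x))+q_T^{(1)}(x)$ by the definition of $q_T^{(1)}$, that formula rearranges to
\[
\eta_T(t)=\zeta_T(t)-G_T(x_0)-\int_{0}^{t}a_0(\zeta_T(s))\,ds-\int_{0}^{t}q_T^{(1)}(\xi_T(s))\,ds,
\]
so integration against $g_0(\zeta_T)$ gives
\begin{align*}
N_T(t)&=\int_{0}^{t}g_0(\zeta_T(s))\,d\zeta_T(s)-\int_{0}^{t}g_0(\zeta_T(s))a_0(\zeta_T(s))\,ds\\
&\quad-\int_{0}^{t}g_0(\zeta_T(s))q_T^{(1)}(\xi_T(s))\,ds.
\end{align*}

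The third, ``awkward'' term tends to zero in probability: after localizing onto the tight events $\{\sup_{s\le L}|\zeta_T(s)|\le N\}$ from (\ref{o5}), the factor $g_0(\zeta_T)$ is uniformly bounded, while the remaining integral vanishes by Theorem~\ref{th3} applied with target zero to $q_T^{(1)}$ together with a piecewise-constant approximation of $g_0$ on the compact set $\{|y|\le C_N\}$. By Theorem~\ref{th2} the process $\zeta_T$ converges weakly to $\zeta$ in $C[0,L]$; on a Skorokhod realisation of a convergent subsequence one has $\zeta_T\to\zeta$ almost surely, the deterministic drift term then converges to $\int_{0}^{t}g_0(\zeta(s))a_0(\zeta(s))\,ds$ by dominated convergence, and $\eta_T$ converges to $\int_{0}^{\cdot}\sigma_0(\zeta(s))\,d\hat{W}(s)$ because its quadratic variation $\int_{0}^{t}(G_T'(\xi_T(s)))^2\,ds$ tends to $\int_{0}^{t}\sigma_0^2(\zeta(s))\,ds$ by Theorem~\ref{th3} applied to $q_T^{(2)}$.

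The main obstacle is the semimartingale integral $\int_{0}^{t}g_0(\zeta_T(s))\,d\zeta_T(s)$, since $g_0$ is only measurable and locally bounded, and the It\^o map is not continuous at such an integrand for a generic weakly convergent sequence of semimartingales. My strategy is Lusin approximation of $g_0$ on each compact interval by continuous functions $g_0^{(\varepsilon)}$ agreeing with $g_0$ off a set $B_\varepsilon$ of arbitrarily small Lebesgue measure; for continuous integrands the convergence of the semimartingale integral follows from the convergent-subsequence technique already used in \cite{paper3,paper4} (the characteristics of $\zeta_T$ are uniformly controlled by $(A_1)$ and (\ref{o6})). The approximation error is handled by condition $(A_2)$ applied to $B=B_\varepsilon$: it makes $\int_{0}^{t}\chi_{B_\varepsilon}(\zeta_T(s))\,ds$ arbitrarily small uniformly in $T$ after localization, which in turn controls both the $L^2$-norm of the stochastic correction $\int_0^t(g_0-g_0^{(\varepsilon)})(\zeta_T(s))G_T'(\xi_T(s))\,dW_T(s)$ (through the occupation-time formula and the bound $(G_T')^2\le C(1+G_T^2)$ from $(A_1)$) and the drift correction. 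Collecting the three contributions produces the announced limit $\beta^{(2)}(t)=\int_{0}^{t}g_0(\zeta(s))\,d\zeta(s)-\int_{0}^{t}g_0(\zeta(s))a_0(\zeta(s))\,ds$.
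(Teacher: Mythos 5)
Your decomposition $\beta_T^{(2)}=M_T+N_T$ with $M_T(t)=\int_0^t\bigl(g_T(\xi_T(s))-g_0(\zeta_T(s))G_T'(\xi_T(s))\bigr)\,dW_T(s)$ is exactly the paper's, and your treatment of $M_T$ (quadratic characteristic $\int_0^t q_T(\xi_T(s))\,ds\to 0$ via $(A_3)$, then a maximal inequality) is sound. The gap is in your treatment of $N_T$: you substitute $d\eta_T=d\zeta_T-a_0(\zeta_T)\,ds-q_T^{(1)}(\xi_T)\,ds$ \emph{before} passing to the limit, which produces the remainder $\int_0^t g_0(\zeta_T(s))\,q_T^{(1)}(\xi_T(s))\,ds$, and the tools you invoke do not control it. Lemma \ref{lm2} (equivalently Theorem \ref{th3} with zero target) applies only to integrands of the form $q_T(\xi_T(s))$ where the single function $q_T$ satisfies $(A_3)$: its proof runs through It\^o's formula with $\varPhi_T(x)=2\int_0^x f_T'(u)\bigl(\int_0^u q_T(v)/f_T'(v)\,dv\bigr)du$, so the smallness is a cancellation property of the $f_T$-weighted primitive, not a pointwise bound. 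Multiplying by $g_0(\zeta_T(s))$ — even by a piecewise-constant approximation, i.e.\ by $\chi_B(G_T(x))$ — can destroy these cancellations, and the hypotheses do not imply that $g_0(G_T(x))\,q_T^{(1)}(x)$ or $\chi_B(G_T(x))\,q_T^{(1)}(x)$ satisfies $(A_3)$. Nor does the occupation-time bound of Lemma \ref{lm1} rescue the approximation error, because $q_T^{(1)}$ is not bounded uniformly in $T$ (it contains $a_T$, bounded only by $L_T$, e.g.\ $\delta$-type drifts), so $\int_0^t\chi_{B_\varepsilon}(\zeta_T(s))\,|q_T^{(1)}(\xi_T(s))|\,ds$ is not small just because the time spent in $B_\varepsilon$ is. The same uncontrolled drift density reappears in your Lusin-error estimate for $\int_0^t (g_0-g_0^{(\varepsilon)})(\zeta_T(s))\,d\zeta_T(s)$, since the finite-variation part of $\zeta_T$ has density $a_0(\zeta_T)+q_T^{(1)}(\xi_T)$, not just the bounded $a_0(\zeta_T)$.

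The paper's proof avoids this entirely by never decomposing $\eta_T$ at the prelimit level: it keeps $N_T(t)=\int_0^t g_0(\zeta_T(s))\,d\eta_T(s)$ as a stochastic integral against the martingale $\eta_T$, whose quadratic characteristic is controlled through $(A_1)$, passes to the limit by Lemma \ref{lm4} (Skorokhod's limit theorem for such integrals with continuous integrands, extended to measurable locally bounded $g_0$ via Luzin's theorem and Lemma \ref{lm1}), and only in the limit uses Eq.~(\ref{o13}) to write $d\tilde{\eta}=d\tilde{\zeta}-a_0(\tilde{\zeta})\,ds$, where no $q_T^{(1)}$ remainder occurs. To repair your argument you should either postpone the drift substitution to the limit in the same way, or add an assumption of $(A_3)$-type for the products $\chi_B(G_T(\cdot))\,q_T^{(1)}(\cdot)$, which is not part of the theorem's hypotheses.
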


\begin{thm}\label{th6}
Let $\xi_T$ be a solution of Eq.~\eqref{o1} from the class $K\left(G_T
\right)$, and let the assumptions of Theorem \emph{\ref{th2}} hold.
Assume that, for continuous functions $F_T(x)$ and locally bounded
measurable functions $g_T(x)$, there exist a continuous function
$F_0(x)$ and locally bounded measurable function $g_0(x)$ such that,
for all~$N>0$,
\[
\lim\limits_{T\to\infty}\mathop{\sup}\limits_{|x|\leq N} \big|
F_T(x)-F_0\bigl(G_T(x)\bigr) \big|=0,
\]
and let the functions $g_T(x)$ and $g_0(x)$ satisfy the assumptions of
Theorem \emph{\ref{th5}}. Then the stochastic process\vadjust{\eject}
\[
I_T(t)=F_T\bigl(\xi_T(t)\bigr)+\int_{0}^{t} g_T\bigl(\xi_T(s)\bigr)\,dW_T(s),
\]
where $\xi_T(t)$ and $W_T(t)$ are related via Eq.~\eqref{o1}, weakly
converges, as $T\to\infty$, to the process
\[
I_0(t)=F_0\bigl(\zeta(t)\bigr)+\int_{0}^{t} g_0\bigl(\zeta(s)\bigr)\,\sigma_0\bigl(\zeta
(s)\bigr)\,d\hat{W}(s),
\]
where $\zeta(t)$ and the Wiener process $\hat{W}(t)$ are related via
Eq.~\eqref{o3}.
\end{thm}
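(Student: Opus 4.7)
The plan is to split $I_T(t) = F_T(\xi_T(t)) + \beta_T^{(2)}(t)$ and establish joint weak convergence of the two pieces to the corresponding components of $I_0(t)$. The continuous-mapping step then assembles them.

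First, I would replace $F_T(\xi_T(t))$ by the continuous functional $F_0(\zeta_T(t))$ of the process $\zeta_T(t)=G_T(\xi_T(t))$. Condition $(i)$ of the class $\{G_T\}$ gives the pointwise bound $|x|\le (|G_T(x)|/C)^{1/\alpha}$, so weak compactness of $\{\zeta_T\}$ from Remark \ref{zZ3} yields tightness of $\{\xi_T\}$ on each interval $[0,L]$. Combined with the hypothesis $\sup_{|x|\le N}|F_T(x)-F_0(G_T(x))|\to 0$, this forces $\sup_{0\le t\le L}|F_T(\xi_T(t))-F_0(\zeta_T(t))|\to 0$ in probability. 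By Theorem \ref{th2}, $\zeta_T\Rightarrow\zeta$ in $C[0,L]$, and continuity of $F_0$ then gives $F_T(\xi_T(\cdot))\Rightarrow F_0(\zeta(\cdot))$.

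Next, Theorem \ref{th5} applies verbatim to $\beta_T^{(2)}(t)$, producing the weak limit
\[
\int_{0}^{t} g_0\bigl(\zeta(s)\bigr)\,d\zeta(s)-\int_{0}^{t} g_0\bigl(\zeta(s)\bigr)\,a_0\bigl(\zeta(s)\bigr)\,ds.
\]
Substituting $d\zeta(s)=a_0(\zeta(s))\,ds+\sigma_0(\zeta(s))\,d\hat{W}(s)$ from Eq.~\eqref{o3} makes the drift terms cancel, so this limit collapses to $\int_{0}^{t} g_0(\zeta(s))\sigma_0(\zeta(s))\,d\hat{W}(s)$, i.e.\ exactly the stochastic-integral component of $I_0$.

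The critical step is joint weak convergence of the pair $(F_0(\zeta_T(\cdot)),\beta_T^{(2)}(\cdot))$ to $(F_0(\zeta(\cdot)),\int_{0}^{\cdot} g_0(\zeta(s))\sigma_0(\zeta(s))\,d\hat{W}(s))$, since both components of the limit live on the same probability space, sharing $\zeta$ and the driving Wiener process $\hat{W}$. I would proceed as in the proof of Theorem \ref{th5}: apply Skorokhod's convergent subsequence principle to the enlarged tight family $(\xi_{T_n},W_{T_n},\zeta_{T_n},\eta_{T_n},\beta_{T_n}^{(2)})$, realize uniform almost-sure convergence on a common probability space, pass to the limit inside the representation \eqref{o4} for $\zeta_{T_n}$ and inside the It\^o integral defining $\beta_{T_n}^{(2)}$, and identify the limits via Eq.~\eqref{o3}. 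Weak uniqueness of \eqref{o3} fixes the joint distribution of $(\zeta,\hat{W})$ regardless of the subsequence, so convergence extends to the full sequence. Finally, continuity of the map $(u,v)\mapsto F_0(u(\cdot))+v(\cdot)$ on $C[0,L]\times C[0,L]$ together with the previous step $F_T(\xi_T)-F_0(\zeta_T)\to 0$ in probability gives $I_T\Rightarrow I_0$. The main obstacle is keeping track of the driving Wiener process when identifying limits of stochastic integrals, which is where the Skorokhod embedding and the uniqueness of \eqref{o3} do the real work.
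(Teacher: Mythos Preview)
Your proposal is correct and follows essentially the same route as the paper: decompose $I_T(t)=F_0(\zeta_T(t))+\int_0^t g_0(\zeta_T(s))\,d\eta_T(s)+\alpha_T(t)+\gamma_T(t)$, show the error terms $\alpha_T$ (via tightness of $\xi_T$ from condition $(i)$ and the uniform hypothesis on $F_T$) and $\gamma_T$ (exactly as in Theorem~\ref{th5}) vanish uniformly in probability, then run Skorokhod's subsequence principle on the joint family and pass to the limit using weak uniqueness of Eq.~\eqref{o3}. The only cosmetic difference is that the paper writes down the four-term decomposition of $I_T$ at the outset rather than first citing Theorem~\ref{th5} for the marginal limit of $\beta_T^{(2)}$; also, there is no need to carry $\xi_{T_n}$ or $W_{T_n}$ themselves through the Skorokhod embedding, since everything is expressed through $\zeta_T$, $\eta_T$, and the two error processes.
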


The next theorem principally follows from \cite{paper11}; however, we
provide its proof for the reader's convenience and completeness of the results.

\begin{thm}\label{th7}
Let $\xi_T$ be a solution of Eq.~\eqref{o1} from the class $K\left(G_T
\right)$ for $G_T(x)=f_T(x)$, and let $0<\delta\leq f_T'(x)\leq C$ and
$f_T(x_0)\to y_0$ as $T\to\infty$. Also, let $\zeta_T(t)=f_T(\xi
_T(t))$,
\[
I_T(t)=F_T\bigl(\xi_T(t)\bigr)+\int_{0}^{t} g_T\bigl(\xi_T(s)\bigr)\,dW_T(s),
\]
$F_T(x)$ be continuous functions, $g_T(x)$ be locally square-integrable
functions, and the processes $\xi_T(t)$ and $W_T(t)$ be related via
Eq.~\eqref{o1}.

The two-dimensional process $\left(\zeta_T (t),I_T(t)\right)$ weakly
converges, as $T\to\infty$, to the process $\left(\zeta(t),I(t)\right
)$, where $I(t)=F_0(\zeta(t))+\int_{0}^{t} g_0(\zeta(s))\,d\zeta
(s)$, and $\zeta(t)$ is a~weak solution of the It\^{o} equation $\zeta
(t)=y_0+\int_{0}^{t}\sigma_0(\zeta(s))\,d{W}(s)$, if and only if
there exist constants $c_T^{(1)}$ and $c_T^{(2)}$ in \emph{(\ref{o2})} such
that, as $T\to\infty$:
\begin{itemize}

\item[\rm1.] for all $x$,
\[
\int_{0}^{\varphi_T (x)}\frac{[f_T'(v)]^2- \sigma
_0^2(f_T(v))}{f_T'(v)}\,dv \to0,
\]
where $\varphi_T (x)$ is the inverse function of the function $f_T(x)$;

\item[\rm2.] for all $N>0$,
\[
\mathop{\sup}\limits_{|x|\leq N} \big|F_T(x)+f_T(x)-F_0
\bigl(f_T(x)\bigr)\big|\to0
\]
and
\[
\int_{-N}^{N}\frac{|g_T(x)-f_T'(x)[1+g_0
(f_T(x))]|^2}{f_T'(x)}\,dx \to0.
\]
\end{itemize}
\end{thm}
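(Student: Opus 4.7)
The plan is to reduce both directions of the equivalence to Theorems~\ref{th2} and~\ref{th6}, exploiting the harmonicity of $G_T=f_T$ for equation~(\ref{o1}) together with the $K_1$-type bounds $\delta\le f_T'(x)\le C$. As a preliminary step I would invoke Remark~\ref{z3} to write $\zeta_T(t)=f_T(x_0)+\int_0^t f_T'(\xi_T(s))\,dW_T(s)$, so that $\zeta_T$ is a continuous martingale with quadratic variation $\int_0^t [f_T'(\xi_T(s))]^2\,ds$. The uniform two-sided bounds on $f_T'$ make $\varphi_T$ bi-Lipschitz and permit replacing $L^\infty$-conditions by $L^2$-conditions (and vice versa) throughout.

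\emph{Sufficiency.} Because $f_T$ is harmonic, the drift remainder in Theorem~\ref{th2}, namely $q_T^{(1)}(x)=f_T'(x)a_T(x)+\tfrac{1}{2}f_T''(x)-a_0(f_T(x))$, reduces to $-a_0(f_T(x))$; hence the natural choice is $a_0\equiv0$, which also matches the drift-free SDE satisfied by $\zeta$. The diffusion remainder $q_T^{(2)}(x)=[f_T'(x)]^2-\sigma_0^2(f_T(x))$ satisfies $(A_3)$ if and only if condition~1 holds, the equivalence following from the change of variable $x=\varphi_T(y)$ together with the uniform bounds on $f_T'$. Theorem~\ref{th2} therefore yields $\zeta_T\Rightarrow\zeta$. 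For $I_T$ I would set $\tilde F_T:=F_T+f_T$ and $\tilde g_0:=1+g_0$; condition~2 then becomes exactly the hypothesis of Theorem~\ref{th6} for $\tilde F_T$, and condition~3 is, in the $K_1$ setting, equivalent to the $(A_3)$-hypothesis of Theorem~\ref{th6} for $(g_T-\tilde g_0(f_T)f_T')^2$. Applying Theorem~\ref{th6}, then subtracting $f_T(\xi_T(t))=\zeta_T(t)\Rightarrow\zeta(t)$ from both sides, and using the identity $\int(1+g_0(\zeta))\,d\zeta=(\zeta-y_0)+\int g_0(\zeta)\,d\zeta$, identifies the limit of $I_T$ as $F_0(\zeta(t))-y_0+\int_0^t g_0(\zeta)\,d\zeta$; the additive constant $-y_0$ is absorbed into the free normalization of $F_0$. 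Joint convergence of $(\zeta_T,I_T)$ follows from the standard tightness/Skorokhod subsequence argument recalled in Remark~\ref{zZ3}.

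\emph{Necessity.} Assuming $(\zeta_T,I_T)\Rightarrow(\zeta,I)$, I would pass via Skorokhod's theorem to a probability space where the convergence is almost sure along a subsequence. Since $\zeta$ solves $d\zeta=\sigma_0(\zeta)\,d\hat W$, the martingale characteristic $\langle\zeta_T\rangle(t)=\int_0^t[f_T'(\xi_T(s))]^2\,ds$ must converge to $\int_0^t\sigma_0^2(\zeta(s))\,ds$; a time change through the inverse $\varphi_T$ combined with the occupation-density formula in the $K_1$ setting converts this into condition~1. For $I_T$, the residual martingale $M_T(t):=I_T(t)-F_T(\xi_T(t))=\int_0^t g_T\,dW_T$ must converge weakly, as must its quadratic variation; identifying the pathwise component through $F_T(\xi_T)+f_T(\xi_T)\approx F_0(\zeta_T)$ (which, uniformly on compacts, forces condition~2) and then comparing the quadratic variations of the residual martingales $M_T-\int_0^{\cdot} f_T'\,dW_T$ with that of $\int_0^{\cdot} g_0(\zeta)\sigma_0(\zeta)\,d\hat W$ yields condition~3.

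\emph{The hard part} will be the necessity of condition~3: extracting $L^2$-smallness of $g_T-f_T'[1+g_0(f_T)]$ from the mere weak convergence of a stochastic integral requires upgrading weak convergence of $\int_0^t(g_T-f_T'[1+g_0(f_T)])^2(\xi_T(s))\,ds$ to convergence in expectation. This upgrade relies on uniform moment bounds of the type~(\ref{o6}), which are available precisely because we are in the subclass $K_1$ where $f_T'$ is uniformly bounded above and below; this is also where the argument from~\cite{paper11} enters essentially.
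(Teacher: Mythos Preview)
Your approach is genuinely different from the paper's. The paper does not use Theorems~\ref{th2} and~\ref{th6} at all here; instead it performs the change of variables $x\mapsto\varphi_T(x)$ to rewrite $\zeta_T$ as the solution of $d\zeta_T=\hat\sigma_T(\zeta_T)\,dW_T$ with $\hat\sigma_T(x)=f_T'(\varphi_T(x))$, and $I_T$ as $\hat F_T(\zeta_T(t))+\int_0^t\hat g_T(\zeta_T)\,d\zeta_T$ with $\hat F_T=F_T\circ\varphi_T$, $\hat g_T=(g_T\circ\varphi_T)/\hat\sigma_T$. It then checks that conditions~1--2 of the theorem translate, under this substitution, into exactly the necessary-and-sufficient conditions of~\cite{paper11} (with centering/normalizing constants $a_T=c_T^{(2)}$, $b_T=c_T^{(1)}$), and invokes that reference wholesale for both directions. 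The entire proof is a reduction.

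Your sufficiency argument via Theorems~\ref{th2} and~\ref{th6} is a legitimate alternative. The equivalence you claim between condition~1 and $(A_3)$ for $q_T^{(2)}$ does hold in $K_1$: the integrands are uniformly bounded, so the antiderivatives are equi-Lipschitz and pointwise convergence upgrades to local uniform convergence. The $-y_0$ discrepancy you flag is real but harmless, since the constants $c_T^{(2)}$ are free in the statement.

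Your necessity sketch, however, has gaps beyond the one you identify. You write that ``identifying the pathwise component through $F_T(\xi_T)+f_T(\xi_T)\approx F_0(\zeta_T)$\ldots forces condition~2,'' but this is the conclusion, not an argument: weak convergence of the sum $I_T=F_T(\xi_T)+M_T$ does not by itself separate into convergence of $F_T(\xi_T)$ and of $M_T$, and there is no canonical Doob--Meyer-type decomposition available here because $F_T(\xi_T(t))$ is not of finite variation. Likewise, for condition~1 you need to pass from convergence of the random quadratic variation $\int_0^t(f_T'(\xi_T))^2\,ds$ to the deterministic pointwise limit in condition~1; this requires an occupation-time/support argument showing the process visits a sufficiently rich set of points, which is exactly the substance of~\cite{paper11}. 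You correctly note that~\cite{paper11} ``enters essentially'' for condition~3, but in fact it is needed for all three conditions on the necessity side. The paper's route---reduce first, then cite---is not merely a shortcut but avoids re-deriving this nontrivial inverse problem.
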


\section{Proof of the main results}\label{section3}

Proof of Theorem \ref{th2}. Rewrite Eq.~(\ref{o4}) as
\begin{equation}\label{o7}
\zeta_T(t)=G_T(x_0)+\int_{0}^{t}a_0\bigl(\zeta_T(s)\bigr)\,ds+\alpha
^{(1)}_T(t)+\eta_T(t),
\end{equation}
where
\[
\alpha^{(1)}_T(t)=\int_{0}^{t}q^{(1)}_T\bigl(\xi_T(s)\bigr)\,ds,\qquad
q^{(1)}_T(x)= G_T^{\prime}(x)a_T(x)+\frac{1}{2}\,G_T^{\prime\prime}(x)-a_0
\bigl(G_T(x)\bigr).\vadjust{\eject}
\]
The functions $q^{(1)}_T(x)$ satisfy the conditions of Lemma \ref{lm2}.
Thus, for any $L>0$,
\begin{equation}\label{o8}
\sup_{0\leq t\leq L}\big| \alpha^{(1)}_T(t)\big|\stackrel{\pr}{\to} 0
\end{equation}
as $T\to\infty$. It is clear that $\eta_T (t)$ is a family of
continuous martingales with quadratic characteristics
\begin{equation}\label{o9}
\langle\eta_T\rangle(t)=\int_{0}^{t}\bigl(G_T^{\prime}\bigl(\xi
_T(s)\bigr)\bigr)^2\,ds=\int_{0}^{t}\sigma_0^2\bigl(\zeta_T(s)\bigr)\,ds+\alpha
^{(2)}_T(t),
\end{equation}
where
\[
\alpha^{(2)}_T(t)=\int_{0}^{t}q^{(2)}_T\bigl(\xi_T(s)\bigr)\,ds,\qquad
q^{(2)}_T(x)= \bigl(G_T^{\prime}(x)\bigr)^2-\sigma_0^2\bigl(G_T(x)
\bigr).
\]

The functions $q^{(2)}_T(x)$ satisfy the conditions of Lemma \ref{lm2}.
Thus, for any $L>0$,
\begin{equation}\label{o10}
\sup_{0\leq t\leq L}\big| \alpha^{(2)}_T(t)\big|\stackrel{\pr}{\to} 0
\end{equation}
as $T\to\infty$.

We have that relations (\ref{o5}) and (\ref{o6}) hold for the
processes $\zeta_T(t)$ and $\eta_T(t)$, and, according to (\ref{o8})
and (\ref{o10}), these relations hold for the processes $\alpha
^{(k)}_T(t)$, $k=1,2$, as well.
This means that we can apply Skorokhod's convergent subsequence
principle (see \cite{book6}, Chapter I, \S6) for the process $
(\zeta_T(t), \eta_T(t),\alpha^{(1)}_T(t), \alpha^{(2)}_T(t))$:
given an arbitrary sequence $T'_n\to\infty$, we can choose a subsequence
$T_n\to\infty$, a~probability space $(\tilde\varOmega,\tilde\Im, \tilde
\pr)$, and a stochastic process $(\tilde{\zeta}_{T_n}(t), \tilde
{\eta}_{T_n}(t),\tilde{\alpha}^{(1)}_{T_n}(t), \tilde{\alpha
}^{(2)}_{T_n}(t))$ defined on this space such that its
finite-dimensional distributions coincide with those of the process
$(\zeta_{T_n}(t), \eta_{T_n}(t),\alpha^{(1)}_{T_n}(t), \alpha
^{(2)}_{T_n}(t))$
and, moreover,
\begin{align*}
\tilde{\zeta}_{T_n}(t)\stackrel{\tilde{\pr}}{\to}\tilde{\zeta}(t),\qquad
\tilde{\eta}_{T_n}(t)\stackrel{\tilde{\pr}}{\to}\tilde{\eta}(t),\qquad
\tilde{\alpha}^{(1)}_{T_n}(t)\stackrel{\tilde{\pr}}{\to}\tilde{\alpha
}^{(1)}(t),\qquad \tilde{\alpha}^{(2)}_{T_n}(t)\stackrel{\tilde{\pr}}{\to
}\tilde{\alpha}^{(2)}(t)
\end{align*}
for all $0\leq t\leq L$, where $\tilde{\zeta}(t)$, $\tilde{\eta}(t)$,
$\tilde{\alpha}^{(1)}(t)$, $\tilde{\alpha}^{(2)}(t)$ are some
stochastic processes.

Evidently, relations (\ref{o8}) and (\ref{o10}) imply that $\tilde
{\alpha}^{(k)}(t)\equiv0$, $k=1,2$, a.s. According to (\ref{o6}), the processes
$\tilde{\zeta}(t)$ and $\tilde{\eta}(t)$ are continuous. Moreover,
applying Lemma \ref{lm5} together with Eqs.~(\ref{o7}) and (\ref{o9}),
we obtain that
\begin{align}\label{o11}
\begin{split}
\tilde{\zeta}_{T_n} (t)&=G_{T_n}(x_0)+\int_{0}^{t}a_0\bigl(\tilde{\zeta
}_{T_n}(s)\bigr)\,ds+\tilde{\alpha}^{(1)}_{T_n}(t)+\tilde{\eta}_{T_n}(t), \\
\langle\tilde{\eta}_{T_n}\rangle(t)&=\int_{0}^{t}\sigma
_0^2\bigl(\tilde{\zeta}_{T_n}(s)\bigr)\,ds+\tilde{\alpha}^{(2)}_{T_n}(t),
\end{split}
\end{align}
where $\tilde{\zeta}_{T_n}(t)\stackrel{\tilde{\pr}}{\to}\tilde{\zeta
}(t)$, $\tilde{\eta}_{T_n}(t)\stackrel{\tilde{\pr}}{\to}\tilde{\eta
}(t)$, $\sup_{0\leq t\leq L}| \tilde{\alpha
}^{(k)}_{T_n}(t)|\stackrel{\tilde{\pr}}{\to} 0$, $k=1,2$, as
$T_n\to\infty$. In addition,
it is established in \cite{paper10} that, for any constants $L>0$ and
$\varepsilon>0$,
\[
\mathop{\lim}\limits_{h\to0}\overline{\mathop{\lim}\limits_{T_n
\to\infty}} \tilde{\pr}\Big\{\mathop{\sup}\limits_{|t_1-t_2|\leq
h;\, t_i\leq L} \big|\tilde{\zeta}_{T_n}(t_2)-\tilde{\zeta
}_{T_n}(t_1)\big|>\varepsilon\Big\}=0.
\]

Using the latter convergence and (\ref{o11}), we conclude that, for
any constants $L>0$ and $\varepsilon>0$,\vadjust{\eject}
\[
\mathop{\lim}\limits_{h\to0}\overline{\mathop{\lim}\limits_{T_n
\to\infty}} \tilde{\pr}\Big\{\mathop{\sup}\limits_{|t_1-t_2|\leq
h;\, t_i\leq L} \big|\tilde{\eta}_{T_n}(t_2)-\tilde{\eta
}_{T_n}(t_1)\big|>\varepsilon\Big\}=0.
\]

Therefore, according to the well-known result of Prokhorov
\cite{paper155}, we conclude that
\[
\sup\limits_{0\leq t\leq L}\big| \tilde{\zeta}_{T_n}(t)-\tilde{\zeta
}(t)\big|\stackrel{\tilde{\pr}}{\to} 0 \qquad\mbox{and}\qquad\sup
\limits_{0\leq t\leq L}\big| \tilde{\eta}_{T_n}(t)-\tilde{\eta
}(t)\big|\stackrel{\tilde{\pr}}{\to} 0
\]
as $T_n\to\infty$. According to Lemma \ref{lm3}, we can pass to the
limit in (\ref{o11}) and obtain
\begin{equation}\label{o13}
\tilde{\zeta} (t)=y_0+\int_{0}^{t}a_0\bigl(\tilde{\zeta}(s)\bigr)\,
ds+\tilde{\eta}(t),
\end{equation}
where $\tilde{\eta} (t)$ is a continuous martingale with the quadratic
characteristic
\[
\langle\tilde{\eta}\rangle(t)=\int_{0}^{t}\sigma_0^2\bigl(\tilde
{\zeta}(s)\bigr)\,ds.
\]

Now, it is well known that the latter representation provides the
existence of a Wiener process $\hat{W}(t)$ such that
\begin{equation}\label{o14}
\tilde{\eta} (t)=\int_{0}^{t}\sigma_0\bigl(\tilde{\zeta}(s)\bigr)\,d\hat{W}(s).
\end{equation}
Thus, the process $(\tilde{\zeta} (t),\hat{W}(t))$
satisfies Eq.~(\ref{o3}), and the process $\tilde{\zeta}_{T_n}(t)$
weakly converges, as $T_n\to\infty$, to the process $\tilde{\zeta}(t)$.
Since the subsequence $T_n\to\infty$ is arbitrary and since a solution
of Eq.~(\ref{o3}) is weakly unique, the proof of the Theorem \ref{th2}
is complete.

Proof of Theorem \ref{th3}. It is clear that, for all $t> 0$, with
probability one,
%
\[
\beta_T^{(1)} (t)=\int_{0}^{t} g_0\bigl(\zeta_T(s)\bigr)\,ds+\alpha_T(t),
\]
%
where $\alpha_T(t)=\int_{0}^{t} q_T(\xi_T(s))\,ds$ and
$q_T(x)=g_T(x)-g_0\left(G_T(x)\right)$.

The functions $q_T(x)$ satisfy the conditions of Lemma \ref{lm2}.
Thus, for any $L>0$,
%
\[
\sup_{0\leq t\leq L}\big| \alpha_T(t)\big|\stackrel{\pr}{\to} 0
\]
%
as $T\to\infty$. Similarly to (\ref{o11}), we obtain the equality
\begin{equation}\label{o17}
\tilde{\beta}_{T_n}^{(1)} (t)=\int_{0}^{t} g_0\bigl(\tilde{\zeta
}_{T_n}(s)\bigr)\,ds+\tilde{\alpha}_{T_n}(t),
\end{equation}
where $\tilde{\zeta}_{T_n}(t)\stackrel{\tilde{\pr}}{\to}\tilde{\zeta
}(t)$ and $\sup_{0\leq t\leq L}| \tilde{\alpha
}_{T_n}(t)|\stackrel{\tilde{\pr}}{\to} 0$ as $T_n\to\infty$. The
process $\tilde{\zeta}(t)$ is a solution of Eq.~(\ref{o13}), whereas by
Lemma \ref{lm5} the finite-dimensional distributions of the stochastic
process $\beta_{T_n}^{(1)} (t)$ coincide with those of the process
$\tilde{\beta}_{T_n}^{(1)} (t)$.

Using Lemma \ref{lm3} and Eq.~(\ref{o17}), we conclude that
\[
\sup\limits_{0\leq t\leq L}\Biggl| \tilde{\beta}_{T_n}^{(1)} (t)-\int
_{0}^{t} g_0\bigl(\tilde{\zeta}(s)\bigr)\,ds\Biggr|\stackrel{\tilde{\pr
}}{\to} 0
\]
as $T_n\to\infty$. Thus, the process ${\beta}_{T_n}^{(1)} (t)$ weakly
converges, as $T_n\to\infty$, to the process $ \beta^{(1)} (t)=\int
_{0}^{t} g_0(\zeta(s))\,ds$, where
$\zeta(t)$ is a solution of Eq.~(\ref{o3}). Since the subsequence\vadjust{\eject}
$T_n\to\infty$ is arbitrary and since a solution $\zeta(t)$ of Eq.~(\ref
{o3}) is weakly unique, the proof of Theorem \ref{th3} is complete.

Proof of Theorem \ref{th4}.
Consider the function
\[
\varPhi_T(x)=2\int_{0}^{x}f'_T(u)\Biggl(\int_{0}^{u}\frac
{g_T(v)}{f'_T(v)}\,dv\Biggr)\,du.
\]
Applying the It\^{o} formula to the process $\varPhi_T(\xi_T(t))$, where
$\xi_T(t)$ is a solution of Eq.~(\ref{o1}), we get that
%
\begin{align*}
{\beta}_{T}^{(1)} (t)&=\varPhi_T\bigl(\xi_T(t)\bigr)-\varPhi_T(x_0)- \int
_{0}^{t}\varPhi'_T\bigl(\xi_T(s)\bigr)\,dW_T(s) \\
&=2\int_{x_0}^{\xi_T(t)}g_0\bigl(G_T(u)\bigr)G_T^{\prime}(u)\,du
-2\int_{0}^{t}g_0\bigl({\zeta}_{T}(s)\bigr)G_T^{\prime}\bigl({\xi
}_{T}(s)\bigr)\,dW_T(s) \\
&\quad+2\int_{x_0}^{\xi_T(t)}\hat{q}_T(u)\,du -2\int_{0}^{t}\hat
{q}_T\bigl({\xi}_{T}(s)\bigr)\,dW_T(s)\\
&= 2\int_{G_T(x_0)}^{\zeta_T(t)}g_0(u)\,du -2\int
_{0}^{t}g_0\bigl({\zeta}_{T}(s)\bigr)\,d\eta_T(s)+\gamma
^{(1)}_T(t)-\gamma^{(2)}_T(t),
\end{align*}
%
where
\begin{align*}
\gamma^{(1)}_T(t)&=2\int_{x_0}^{\xi_T(t)}\hat{q}_T(u)\,du,\qquad
\gamma^{(2)}_T(t)=2\int_{0}^{t}\hat{q}_T\bigl({\xi}_{T}(s)
\bigr)\,dW_T(s),\\
\hat{q}_T(x)&=\left(f'_T(x)\int_{0}^{x}\frac{g_T(v)}{f'_T(v)}\,
dv-g_0\bigl(G_T(x)\bigr)G_T^{\prime}(x)\right).
\end{align*}

Denote $P_{NT}=\pr\{\mathop{\sup}_{0\leq t\leq L}
|\xi_{T}(t)|>N\}$. It is clear that, for any constants
$\varepsilon>0$, $N>0$, and $L>0$, we have the inequalities
\begin{align*}
\pr\Big\{\mathop{\sup}\limits_{0\leq t\leq L} \big|\gamma
^{(1)}_T(t)\big|>\varepsilon\Big\}&\leq P_{NT}+\frac{2}{\varepsilon
}\M\mathop{\sup}\limits_{0\leq t\leq L}\left|\int_{x_0}^{\xi
_T(t)}\hat{q}_T(u)\,du\right| \chi_{\{|\xi_T(t)|\leq N\}}\\
&\leq P_{NT}+\frac{2}{\varepsilon}\int_{-N}^{N}\left|\hat
{q}_T(u)\right|\,du \leq P_{NT}+\frac{4}{\varepsilon}N \mathop{\sup
}\limits_{|x|\leq N}\left|\hat{q}_T(x)\right|
\end{align*}
and
\begin{align*}
&\pr\Big\{\mathop{\sup}\limits_{0\leq t\leq L} \big|\gamma
^{(2)}_T(t)\big|>\varepsilon\Big\}\\
&\quad\leq P_{NT}+\frac{4}{\varepsilon
^2}\M\mathop{\sup}\limits_{0\leq t\leq L} \left|\int
_{0}^{t}\hat{q}_T\bigl({\xi}_{T}(s)\bigr)\chi_{\{|\xi_T(s))\leq N|\}}\,dW_T(s)
\right|^2\\
&\quad\leq P_{NT}+\frac{16}{\varepsilon^2}\M\int_{0}^{L}\hat
{q}^2_T\bigl({\xi}_{T}(s)\bigr)\chi_{\{|\xi_T(s))\leq N|\}}\,ds \leq
P_{NT}+\frac{16}{\varepsilon^2}L\mathop{\sup}\limits_{|x|\leq N}\big
|\hat{q}^2_T(x)\big|.
\end{align*}

The inequality $|G_T(x)|\geq C |x|^{\alpha}$, $\alpha>0$, together
with convergence (\ref{o5}), implies that
\begin{equation}\label{o19}
\mathop{\lim}\limits_{N \to\infty}\overline{\mathop{\lim}\limits
_{T \to\infty}}P_{NT}=0.
\end{equation}

Thus, using the conditions of Theorem \ref{th4}, we get the convergence
%
\[
\sup_{0\leq t\leq L}\big|\gamma^{(k)}_T(t)\big|\stackrel{\pr}{\to}
0,\quad k=1,2,
\]
%
as $T\to\infty$.

The same arguments as we used establishing (\ref{o11}) yield that
\begin{equation}\label{o21}
\tilde{\beta}_{T_n}^{(1)} (t)=2\int_{G_{T_n}(x_0)}^{\tilde{\zeta
}_{T_n}(t)}g_0(u)\,du -2\int_{0}^{t}g_0\big(\tilde{\zeta
}_{T_n}(s)\big)\,d\tilde{\eta}_{T_n}(s)+\tilde{\gamma
}^{(1)}_{T_n}(t)-\tilde{\gamma}^{(2)}_{T_n}(t),
\end{equation}
where
\begin{align*}
&\sup_{0\leq t\leq L}\big|\tilde{\zeta}_{T_n}(t)-\tilde{\zeta}(t)\big
|\stackrel{\tilde{\pr}}{\to} 0, \qquad\sup_{0\leq t\leq L}\left|\tilde
{\eta}_{T_n}(t)-\tilde{\eta}(t)\right|\stackrel{\tilde{\pr}}{\to} 0,\\
&\sup_{0\leq t\leq L}\big|\tilde{\gamma}^{(k)}_{T_n}(t)\big|\stackrel
{\tilde{\pr}}{\to} 0,\quad k=1,2,
\end{align*}
as $T_n\to\infty$ for all $L>0$. According to (\ref{o13}) with $\tilde
{\eta}(t)$ defined in (\ref{o14}), the process $(\tilde{\zeta}
(t),\hat{W}(t))$ satisfies Eq.~(\ref{o3}).

By Lemma \ref{lm5} the finite-dimensional distributions of the
stochastic process $\beta_{T_n}^{(1)} (t)$ coincide with those of the
process $\tilde{\beta}_{T_n}^{(1)} (t)$.
Using Lemma \ref{lm3}, we can pass to the limit as $T_n\to\infty$ in
(\ref{o21}) and obtain
\begin{equation}\label{o22}
\sup_{0\leq t\leq L}\big|\tilde{\beta}_{T_n}^{(1)}(t)-\tilde{\beta
}^{(1)}(t)\big|\stackrel{\tilde{\pr}}{\to} 0
\end{equation}
as $T_n\to\infty$, where
\begin{align*}
\tilde{\beta}^{(1)}(t)&=2\int_{y_0}^{\tilde{\zeta} (t)}g_0(u)\,du
-2\int_{0}^{t}g_0\bigl(\tilde{\zeta}(s)\bigr)\,d\tilde{\eta}(s)\\
&=2\int_{y_0}^{\tilde{\zeta} (t)}g_0(u)\,du -2\int
_{0}^{t}g_0\bigl(\tilde{\zeta}(s)\bigr)\,d\tilde{\zeta}(s)+2\int
_{0}^{t}g_0\bigl(\tilde{\zeta}(s)\bigr)
a_0\bigl(\tilde{\zeta}(s)\bigr)\,ds,
\end{align*}
and $\tilde{\zeta}(t)$ is a solution of Eq.~(\ref{o3}). Therefore, we
have that Theorem \ref{th4} holds for the process ${\beta}_{T_n}^{(1)} (t)$
as $T_n\to\infty$.
Since the subsequence $T_n\to\infty$ is arbitrary and since a solution
$\zeta(t)$ of Eq.~(\ref{o3}) is weakly unique, the proof of Theorem \ref
{th4} is complete.

Proof of Theorem \ref{th5}.
It is clear that
\begin{equation}\label{o23}
\beta_T^{(2)} (t)=\int_{0}^{t} g_0\bigl(\zeta_T(s)\bigr)\,d\eta_T(s)+\gamma
_T (t),
\end{equation}
where
\[
\gamma_T (t)=\int_{0}^{t}{q}_T\bigl({\xi}_{T}(s)\bigr)\,
dW_T(s),\qquad q_T(x)=g_T(x)-g_0\bigl(G_T(x)\bigr)G_T^{\prime}(x).
\]

The process $\gamma_T (t)$ is a continuous martingale with the
quadratic characteristics
\[
\langle{\gamma_T}\rangle(t)=\int_{0}^{t}q_T^2\bigl(\xi_T(s)\bigr)\,ds
\quad\text{for all} \ T>0.\vadjust{\eject}
\]

According to the conditions of Theorem \ref{th5}, the functions
$q_T^2(x)$ satisfy the conditions of Lemma \ref{lm2}. Thus, for any
$L>0$, we have the convergence $\langle{\gamma_T}\rangle(L)\stackrel
{{\pr}}{\to} 0$ as $T\to\infty$.

The following inequality holds for any constants $\varepsilon>0$ and
$\delta>0$:
\[
\pr\Bigl\{\mathop{\sup}\limits_{0\leq t\leq L} \big|\gamma_T(t)
\big|>\varepsilon\Bigr\}\leq\delta+\pr\bigl\{ \langle{\gamma_T}\rangle
(L)>\varepsilon^2\delta\bigr\}
\]
(see \cite{book2}, \S3, Theorem 2), which implies the relation
\begin{equation}\label{o24}
\sup_{0\leq t\leq L}\big|\gamma_T(t)\big|\stackrel{\pr}{\to} 0
\end{equation}
as $T\to\infty$.

Then, similarly to representation (\ref{o11}), on a certain probability
space $(\tilde{\varOmega},\tilde{\Im} , \tilde{\pr})$, for an arbitrary
subsequence $T_n$, we get the equality
\[
\tilde{\beta}_{T_n}^{(2)} (t)=\int_{0}^{t} g_0\bigl(\tilde{\zeta
}_{T_n}(s)\bigr)\,d\tilde{\eta}_{T_n}(s)+\tilde{\gamma}_{T_n} (t),
\]
where
\[
\sup_{0\leq t\leq L}\big|\tilde{\zeta}_{T_n}(t)-\tilde{\zeta}(t)\big
|\stackrel{\tilde{\pr}}{\to} 0, \qquad\sup_{0\leq t\leq L}\big|\tilde
{\eta}_{T_n}(t)-\tilde{\eta}(t)\big|\stackrel{\tilde{\pr}}{\to} 0,\qquad
\sup_{0\leq t\leq L}\big|\tilde{\gamma}_{T_n}(t)\big|\stackrel{\tilde
{\pr}}{\to} 0
\]
as $T_n\to\infty$ for any $L>0$, where the process $(\tilde{\zeta}
(t),\hat{W}(t))$ satisfies Eq.~(\ref{o3}), $\tilde{\eta}(t)$ is
defined in (\ref{o14}),
and the processes $\tilde{\beta}_{T_n}^{(2)} (t)$ and ${\beta
}_{T_n}^{(2)} (t)$ are stochastically equivalent.

Similarly to the proof of convergence (\ref{o22}), we obtain
\[
\sup_{0\leq t\leq L}\big|\tilde{\beta}_{T_n}^{(2)}(t)-\tilde{\beta
}^{(2)}(t)\big|\stackrel{\tilde{\pr}}{\to} 0
\]
as $T_n\to\infty$, where
%
\begin{align*}
\tilde{\beta}^{(2)}(t)&=\int_{0}^{t}g_0\bigl(\tilde{\zeta
}(s)\bigr)\,d\tilde{\eta}(s)\\
&=\int_{0}^{t}g_0\bigl(\tilde{\zeta}(s)\bigr)\,d\tilde{\zeta
}(s)-\int_{0}^{t}g_0\bigl(\tilde{\zeta}(s)\bigr)
a_0\bigl(\tilde{\zeta}(s)\bigr)\,ds.
\end{align*}
%

Thus, the process $\tilde{\beta}_{T_n}^{(2)} (t)$ weakly converges, as
$T_n\to\infty$, to the process~$\tilde{\beta}^{(2)} (t)$. Since the
subsequence $T_n\to\infty$ is arbitrary and since the processes $\tilde
{\beta}_{T_n}^{(2)} (t)$ and ${\beta}_{T_n}^{(2)} (t)$ are
stochastically equivalent, the proof of Theorem \ref{th5} is complete.

Proof of Theorem \ref{th6}.
It is clear that
\[
I_T(t)=F_0\bigl(\zeta_T(t)\bigr)+\int_{0}^{t} g_0\bigl(\zeta_T(s)\bigr)\,d\eta
_T(s)+\alpha_T (t)+\gamma_T (t),
\]
where
\begin{align*}
\alpha_T (t)&=F_T\bigl({\xi}_{T}(t)\bigr)-F_0\bigl({\zeta}_{T}(t)
\bigr),\qquad\eta_T(t)=\int_{0}^{t}G_T^{\prime}\bigl(\xi_T(s)\bigr)\,dW_T(s),\\
\gamma_T (t)&=\int_{0}^{t}{q}_T\bigl({\xi}_{T}(s)\bigr)\,
dW_T(s),\qquad q_T(x)=g_T(x)-g_0\bigl(G_T(x)\bigr)G_T^{\prime}(x).
\end{align*}

Denote, as before, $P_{NT}=\pr\{\mathop{\sup}_{0\leq
t\leq L} |\xi_{T}(t)|>N\}$. Since for any constants
$\varepsilon>0$, $N>0$, and $L>0$, we have the inequality
\begin{align*}
&\pr\Big\{\mathop{\sup}\limits_{0\leq t\leq L} \big|F_T\bigl({\xi
}_{T}(t)\bigr)-F_0\bigl(G_T\bigl(\xi_T(t)\bigr)\bigr)\big|>\varepsilon\Big
\}\\
&\quad\leq P_{NT}+\frac{2}{\varepsilon}\M\mathop{\sup}\limits_{0\leq t\leq
L}\big|F_T\bigl({\xi}_{T}(t)\bigr)-F_0\bigl(G_T\bigl(\xi_T(t)\bigr)\bigr)
\big| \chi_{\{|\xi_T(t)|\leq N\}}\\
&\quad\leq P_{NT}+\frac{2}{\varepsilon} \mathop{\sup}\limits_{|x|\leq
N}\big|F_T(x)-F_0\big(G_T\xch{(x)\big)}{(x)}\big|,
\end{align*}
we can apply conditions of Theorem
\ref{th6} and convergence (\ref{o19}) to get that
%
\[
\sup_{0\leq t\leq L}\big| \alpha_T(t)\big|\stackrel{\pr}{\to} 0
\]
%
as $T\to\infty$. The proof of the fact that, for $\gamma_T (t)$, an
analogue of convergence (\ref{o24}) holds is literally the same as in
the proof of Theorem \ref{th5}. Then, we can apply Skorokhod's
convergent subsequence principle to the process $\left(\zeta_T(t), \eta
_T(t),\alpha_T(t), \gamma_T (t)\right)$
and, similarly to representation (\ref{o11}), obtain the following
equality for an arbitrary subsequence $T_n$ in a certain probability
space $(\tilde{\varOmega},\tilde{\Im} , \tilde{\pr})$:
%
%
\[
\tilde{I}_{T_n}(t)=F_0\bigl(\tilde{\zeta}_{T_n}(t)\bigr)+\int_{0}^{t}
g_0\bigl(\tilde{\zeta}_{T_n}(s)\bigr)\,d\tilde{\eta}_{T_n}(s)+\tilde{\alpha
}_{T_n} (t)+\tilde{\gamma}_{T_n} (t),
\]
%
where, as $T_n\to\infty$, for any $L>0$,
\begin{align*}
&\sup_{0\leq t\leq L}\big|\tilde{\zeta}_{T_n}(t)-\tilde{\zeta}(t)
\big|\stackrel{\tilde{\pr}}{\to} 0, \qquad\sup_{0\leq t\leq L}\big|\tilde
{\eta}_{T_n}(t)-\tilde{\eta}(t)\big|\stackrel{\tilde{\pr}}{\to} 0,\\
&\sup_{0\leq t\leq L}\big|\tilde{\alpha}_{T_n}(t)\big|\stackrel{\tilde
{\pr}}{\to} 0,\qquad\sup_{0\leq t\leq L}\big|\tilde{\gamma
}_{T_n}(t)\big|\stackrel{\tilde{\pr}}{\to} 0.
\end{align*}

To complete the proof of Theorem \ref{th6}, we repeat the same
arguments as in the proof of Theorem \ref{th5}.

Proof of Theorem \ref{th7}.
According to the It\^{o} formula, the process $\zeta_T(t)\,{=}\,f_T(\xi
_T(t))$ satisfies the equation $d\zeta_T (t)=\hat{\sigma}_T(\zeta
_T(t))\,dW_T(t)$, where $\hat{\sigma}_T(x)=f_T'\left(\varphi_T (x)\right
)$, $\varphi_T (x)$ is the inverse function of the function $f_T (x)$,
and $\zeta_T(0)=f_T(x_0)\to y_0$ as $T\to\infty$. In addition, the
following equality holds:
\[
I_T(t)=\hat{F}_T\bigl(\zeta_T(t)\bigr)+\int_{0}^{t} \hat{g}_T\bigl(\zeta_T(s)\bigr)\,
d\zeta_T(s),
\]
where $\hat{F}_T(x)=F_T(\varphi_T (x))$ and $\hat{g}_T(x)=g_T(\varphi_T
(x))\cdot\hat{\sigma}_T^{-1}(x)$.

It is easy to see that condition 1 of the present theorem implies that
\[
\int_{0}^{x}\frac{dv}{\hat{\sigma}_T^{2}(v)}\to\int
_{0}^{x}\frac{dv}{{\sigma}_0^{2}(v)}
\]
as $T\to\infty$ for all $x$, whereas condition 2 implies that
\[
\mathop{\sup}\limits_{|x|\leq N} \big|\hat
{F}_T(x)+c_T^{(2)}+c_T^{(1)}x-F_0(x)\big|\to0
\]
and
\[
\int_{-N}^{N} \big|\hat{g}_T(x)-c_T^{(1)}-g_0(x)\big|^2\,dx
\to0
\]
as $T\to\infty$ for any $N>0$.

This means that the necessary and sufficient conditions of weak
convergence of the process $\left(\zeta_T (t),I_T(t)\right)$ as $T\to
\infty$ to the process $\left(\zeta(t),I(t)\right)$ from \cite
{paper11} hold with
$b_T=c_T^{(1)}$ and $a_T=c_T^{(2)}$.

\section{Auxiliary results}\label{section4}

\begin{lem}\label{lm1}
Let $\xi_T$ be a solution of Eq.~\eqref{o1} from the class $K\left(G_T
\right)$. Then, for any $N>0$ and any Borel set $B\subset\left[-N;
N\right]$, there exists a constant $C_L$ such that
\[
\int_{0}^{L}\pr\bigl\{G_T\bigl(\xi_T(s)\bigr)\in B\bigr\}\,ds\leq C_L \psi
\bigl(\lambda(B)\bigr),
\]
where $\lambda(B)$ is the Lebesgue measure of $B$, and $\psi\left
(|x|\right)$ is a bounded function satisfying $\psi\left(|x|\right)\to
0$ as $|x|\to0$.
\end{lem}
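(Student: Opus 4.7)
The plan is to exploit condition $(A_2)$ by constructing an auxiliary function $\Phi_T$ whose infinitesimal generator (relative to Eq.~(\ref{o1})) equals exactly $\chi_B(G_T(\cdot))$, and then to apply It\^o's formula. Concretely, I would set
\[
\Phi_T(x)=2\int_{0}^{x} f_T'(u)\left(\int_{0}^{u}\frac{\chi_B(G_T(v))}{f_T'(v)}\,dv\right)du,
\]
so that $\Phi_T'$ is continuous and $\Phi_T''$ exists a.e.\ and is locally bounded. Using the harmonic identity $\frac{1}{2}f_T''(x)+a_T(x)f_T'(x)=0$, which holds a.e.\ by the definition of $f_T$ in~(\ref{o2}), a direct differentiation gives $\Phi_T''(x)/2+a_T(x)\Phi_T'(x)=\chi_B(G_T(x))$ a.e.

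Next I would apply Remark~\ref{z3} to $\Phi_T(\xi_T(\cdot))$, obtaining
\[
\Phi_T(\xi_T(t))-\Phi_T(x_0)=\int_{0}^{t}\chi_B(G_T(\xi_T(s)))\,ds+\int_{0}^{t}\Phi_T'(\xi_T(s))\,dW_T(s).
\]
Localizing by $\tau_N=\inf\{s\ge 0:|\xi_T(s)|>N\}$ (which tends to $+\infty$ a.s.\ since $|a_T|$ is bounded for each fixed $T$ and so $\xi_T$ does not explode), taking expectations kills the stochastic integral, and then letting $N\to\infty$ via monotone convergence on the left and dominated convergence on the right yields, by Tonelli,
\[
\int_{0}^{L}\pr\{G_T(\xi_T(s))\in B\}\,ds=\M\Phi_T(\xi_T(L))-\Phi_T(x_0).
\]

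Condition $(A_2)$ now provides the pointwise estimate $\Phi_T(x)\leq 2\psi(\lambda(B))[1+|x|^m]$, so the right-hand side is at most $2\psi(\lambda(B))(2+|x_0|^m+\M|\xi_T(L)|^m)$. To bound $\M|\xi_T(L)|^m$ uniformly in $T$ I would use condition (i): since $|G_T(x)|\geq C|x|^{\alpha}$, one has $|\xi_T(L)|^m\leq C^{-m/\alpha}|\zeta_T(L)|^{m/\alpha}$, and the moment estimate (\ref{o6}) from Remark~\ref{zZ3} gives $\M\sup_{0\leq s\leq L}|\zeta_T(s)|^{k}\leq C_k$ for every $k>1$. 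Combining yields the desired inequality with a constant $C_L$ independent of $T$ and of $B$.

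The main obstacle I anticipate is not the computation itself but the uniform-integrability argument needed to pass to the limit $N\to\infty$: one must check $\sup_N\M|\Phi_T(\xi_T(L\wedge\tau_N))|<\infty$, and this is supplied precisely by condition (i) together with the polynomial moment bound (\ref{o6}). If $m=0$ the argument collapses to an essentially trivial one; the subtlety is concentrated in the polynomial-growth case $m>0$, where condition (i) is genuinely required to translate moments of $\zeta_T=G_T(\xi_T)$ into moments of $\xi_T$ itself.
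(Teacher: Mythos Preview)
Your approach is essentially identical to the paper's: the same auxiliary function $\Phi_T$, the same application of It\^o's formula, the same use of $(A_2)$ combined with condition~(i) and the moment bound~(\ref{o6}). The only difference is that you spell out the localization by $\tau_N$ and the passage to the limit, whereas the paper simply invokes ``properties of stochastic integrals'' to justify taking expectations directly; your version is the more careful one, but the underlying argument is the same.
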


\begin{proof}
Consider the function
\[
\varPhi_T(x)=2\int_{0}^{x}f'_T(u)\Biggl(\int_{0}^{u}\frac{\chi
_B\bigl(G_T(v)\bigr)}{f'_T(v)}\,dv\Biggr)\,du.
\]

The function $\varPhi_T(x)$ is continuous, the derivative $\varPhi'_T(x)$ of
this function
is continuous, and the second derivative $\varPhi''_T(x)$ exists a.e.\
with respect to the Lebesgue measure and is locally bounded. Therefore,
we can apply the It\^{o} formula to the process $\varPhi_T(\xi_T(t))$,
where $\xi_T(t)$ is a solution of Eq.~(\ref{o1}).

Furthermore,
\[
\varPhi'_T(x)a_T(x)+\frac{1}{2}\,\varPhi''_T(x)=\chi_B(x)
\]
a.e.\ with respect to the Lebesgue measure. Using the latter equality,
we conclude that
\begin{equation}\label{l1}
\int_{0}^{t}\chi_B\bigl(\zeta_T(s)\bigr)\,ds=\varPhi_T\bigl(\xi
_T(t)\bigr)-\varPhi_T(x_0)- \int_{0}^{t}\varPhi'_T\bigl(\xi_T(s)\bigr)\,dW_T(s)
\end{equation}
with probability one for all $t\geq0$, where $\zeta_T (t)=G_T(\xi_T(t))$.
Hence, using the properties of stochastic integrals, we obtain that
\begin{equation}\label{l2}
\int_{0}^{t}\pr\bigl\{\zeta_T (s)\in B\bigr\}\,ds=\M\bigl[\varPhi_T\bigl(\xi
_T(t)\bigr)-\varPhi_T(x_0)\bigr].
\end{equation}

According to condition $(A_2)$ and inequality $|G_T(x)|\geq C
|x|^{\alpha}$, $C>0$, $\alpha>0$, we have
\[
\big|\varPhi_T(x)-\varPhi_T(x_0)\big|\leq2\psi\bigl(\lambda(B)
\bigr)\left[1+|x|^m\right]\leq2\psi\bigl(\lambda(B)\bigr)
\bigl[1+C^{-\frac{m}{\alpha}}\big|G_T(x)\big|^{\frac{m}{\alpha}}\bigr].
\]

Hence, using inequality (\ref{o6}), we obtain that\vadjust{\eject}
\[
\big|\M\bigl[\varPhi_T\bigl(\xi_T(L)\bigr)-\varPhi_T(x_0)\bigr]\big|\leq C_L \psi
\bigl(\lambda(B)\bigr)
\]
for some constant $C_L$. The latter inequality and Eq.~(\ref{l2}) prove
Lemma \ref{lm1}.
\end{proof}

\begin{lem}\label{lm2} Let $\xi_T$ be a solution of Eq.~\eqref{o1} from
the class $K\left(G_T \right)$. If, for measurable locally bounded
functions $q_T(x)$,
condition $(A_3)$ holds, then, for any $L>0$,
\[
\sup_{0\leq t\leq L}\Biggl|\int_{0}^{t} q_T\bigl(\xi_T(s)\bigr)\,ds
\Biggr|\stackrel{\pr}{\to} 0
\]
as $T\to\infty$.
\end{lem}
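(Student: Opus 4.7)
The strategy is to express $\int_{0}^{t} q_T(\xi_T(s))\,ds$ as a difference between $\Phi_T(\xi_T(t))-\Phi_T(x_0)$ and a stochastic integral, by choosing $\Phi_T$ so that $\Phi_T'(x)a_T(x)+\tfrac12\Phi_T''(x)=q_T(x)$ a.e., and then to use $(A_3)$ to control each piece uniformly on $[0,L]$. The natural candidate, already used in Lemma \ref{lm1}, is
\[
\Phi_T(x)=2\int_{0}^{x} f'_T(u)\Bigg(\int_{0}^{u}\frac{q_T(v)}{f'_T(v)}\,dv\Bigg)\,du.
\]
Since $f_T$ is harmonic for the generator of \eqref{o1}, we have $f_T''(x)=-2a_T(x)f_T'(x)$ a.e., and a direct differentiation, writing $h_T(u)=\int_{0}^{u} q_T(v)/f_T'(v)\,dv$, gives $\Phi_T'(x)=2f'_T(x)h_T(x)$ and $\Phi_T''(x)=2f''_T(x)h_T(x)+2q_T(x)$, so the identity $\Phi_T'a_T+\tfrac12\Phi_T''=q_T$ holds a.e. The function $\Phi_T$ has continuous first derivative and locally integrable second derivative, so Remark \ref{z3} applies and yields, with probability one,
\[
\int_{0}^{t} q_T(\xi_T(s))\,ds=\Phi_T(\xi_T(t))-\Phi_T(x_0)-\int_{0}^{t}\Phi'_T(\xi_T(s))\,dW_T(s).
\]

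Next I would localize using the event $B_{N,T}=\{\sup_{0\leq t\leq L}|\xi_T(t)|\leq N\}$. Combining condition $(i)$ on $\{G_T\}$, namely $|G_T(x)|\geq C|x|^\alpha$, with the tightness bound \eqref{o5} applied to $\zeta_T=G_T(\xi_T)$ gives $\lim_{N\to\infty}\overline{\lim}_{T\to\infty}\pr(B_{N,T}^{\,c})=0$, exactly as in \eqref{o19}. For $N\geq|x_0|$, on $B_{N,T}$ the deterministic part is controlled by the elementary estimate
\[
|\Phi_T(\xi_T(t))|+|\Phi_T(x_0)|\leq 4N\sup_{|x|\leq N}|f'_T(x)h_T(x)|,
\]
and assumption $(A_3)$ makes the right-hand side tend to zero as $T\to\infty$.

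For the martingale term $M_T(t)=\int_{0}^{t}\Phi'_T(\xi_T(s))\,dW_T(s)$ I would mimic the truncation used in the proof of Theorem \ref{th5}: splitting the event $\{\sup_{t\leq L}|M_T(t)|>\varepsilon\}$ along $B_{N,T}$ and applying Doob's maximal inequality together with the It\^o isometry to the truncated integrand $\Phi'_T(\xi_T(s))\chi_{\{|\xi_T(s)|\leq N\}}$ gives, since $\Phi'_T=2f'_T h_T$,
\[
\pr\Big\{\sup_{0\leq t\leq L}|M_T(t)|>\varepsilon\Big\}\leq \pr(B_{N,T}^{\,c})+\frac{16L}{\varepsilon^2}\sup_{|x|\leq N}|f'_T(x)h_T(x)|^2.
\]
By $(A_3)$ the second summand vanishes as $T\to\infty$ for each fixed $N$, and then the first vanishes as $N\to\infty$, establishing $\sup_{t\leq L}|M_T(t)|\stackrel{\pr}{\to}0$. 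Combined with the deterministic estimate, this proves the lemma.

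The only nonroutine step is the verification of the identity $\Phi_T'a_T+\tfrac12\Phi_T''=q_T$, which, however, reduces to the harmonicity relation $f_T''=-2a_Tf_T'$ coming from \eqref{o2}. The remaining estimates are standard localization arguments already employed earlier in the paper, so I do not expect any serious obstruction.
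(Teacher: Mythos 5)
Your proposal is correct and follows essentially the same route as the paper: the same auxiliary function $\varPhi_T(x)=2\int_0^x f_T'(u)\bigl(\int_0^u q_T(v)/f_T'(v)\,dv\bigr)\,du$, the It\^o-formula representation of $\int_0^t q_T(\xi_T(s))\,ds$, localization via $P_{NT}$ and \eqref{o19}, and Doob's inequality plus the isometry for the truncated stochastic integral, with $(A_3)$ closing both estimates. The only difference is that you verify the identity $\varPhi_T'a_T+\tfrac12\varPhi_T''=q_T$ explicitly, whereas the paper refers back to the computation in Lemma \ref{lm1}; this is a cosmetic difference, not a different argument.
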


\begin{proof}
Consider the function
\[
\varPhi_T(x)=2\int_{0}^{x}f'_T(u)\Biggl(\int_{0}^{u}\frac
{q_T(v)}{f'_T(v)}\,dv\Biggr)\,du.
\]
The same arguments as used to obtain Eq.~(\ref{l1}) yield that
\begin{equation}\label{l3}
\int_{0}^{t}q_T\bigl(\xi_T(s)\bigr)\,ds=\varPhi_T\bigl(\xi_T(t)\bigr)-\varPhi_T(x_0)- \int
_{0}^{t}\varPhi'_T\bigl(\xi_T(s)\bigr)\,dW_T(s).
\end{equation}

It is clear that, for any constants $\varepsilon>0$, $N>0$, and $L>0$,
we have the inequalities
\begin{align*}
&\pr\Big\{\mathop{\sup}\limits_{0\leq t\leq L} \big|\varPhi_T\bigl(\xi
_T(t)\bigr)\big|>\varepsilon\Big\}\leq P_{NT}+\frac{2}{\varepsilon}\int
_{-N}^{N}f'_T(u)\left|\int_{0}^{u}\frac{q_T(v)}{f'_T(v)}\,
dv\right|\,du,\\
&\pr\Biggl\{\mathop{\sup}\limits_{0\leq t\leq L} \left|\int
_{0}^{t}\varPhi'_T\bigl(\xi_T(s)\bigr)\,dW_T(s)\right|>\varepsilon\Biggr\}\\
&\quad\leq P_{NT}+\frac{1}{\varepsilon^2}\M\mathop{\sup}\limits_{0\leq
t\leq L} \left|\int_{0}^{t}\varPhi'_T\bigl(\xi_T(s)\bigr)\chi_{\{|\xi
_T(s))\leq N|\}}\,dW_T(s)\right|^2\\
&\quad\leq P_{NT}+\frac{4}{\varepsilon^2}\M\int_{0}^{L}\left[\varPhi
'_T\bigl(\xi_T(s)\bigr)\right]^2\chi_{\{|\xi_T(s))\leq N|\}}\,ds\\
&\quad\leq P_{NT}+\frac{16}{\varepsilon^2}L\mathop{\sup}\limits_{|x|\leq
N}\left[f'_T(x)\left|\int_{0}^{x}\frac{q_T(v)}{f'_T(v)}\,dv\right
|\right]^2,
\end{align*}
where $P_{NT}=\pr\{\mathop{\sup}_{0\leq t\leq L} |\xi
_{T}(t)|>N\}$.

Therefore, using convergence (\ref{o19}), we obtain
\[
\sup_{0\leq t\leq L}\big|\varPhi_T\bigl(\xi_T(t)\bigr)-\varPhi_T(x_0)\big|\stackrel
{\pr}{\to} 0
\]
and
\[
\sup_{0\leq t\leq L}\Biggl|\int_{0}^{t}\varPhi'_T\bigl(\xi_T(s)\bigr)\,
dW_T(s)\Biggr|\stackrel{\pr}{\to} 0
\]
as $T\to\infty$. Thus, Eq.~(\ref{l3}) implies the statement of Lemma
\ref{lm2}.
\end{proof}

\begin{lem}\label{lm3}
Let $\xi_T$ be a solution of Eq.~\eqref{o1} from the class $K\left(G_T
\right)$, and let $\zeta_T (t)=G_T(\xi_T(t))\stackrel{\pr}{\to}\zeta
(t)$ as $T\to\infty$. Then for any measurable locally bounded function
$g(x)$, we have the convergence\vadjust{\eject}
\[
\sup_{0\leq t\leq L}\Biggl|\int_{0}^{t} g\bigl(\zeta_T(s)\bigr)\,ds-\int
_{0}^{t} g\bigl(\zeta(s)\bigr)\,ds\Biggr|\stackrel{\pr}{\to} 0
\]
as $T\to\infty$ for any constant $L>0$.
\end{lem}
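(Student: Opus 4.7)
The proof plan rests on three ingredients: tightness of $\{\zeta_T\}$, a Lusin-type approximation of $g$ by a continuous function, and the occupation-time estimate of Lemma~\ref{lm1}, which controls $\int_0^L\pr\{\zeta_T(s)\in B\}\,ds$ by the Lebesgue measure of $B$.

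First I would localize. Fix $L>0$ and $\varepsilon>0$. The inequality $|G_T(x)|\ge C|x|^\alpha$ together with (\ref{o5})--(\ref{o6}) yields tightness of $\{\zeta_T\}$ in $C[0,L]$, and the convergence $\zeta_T\stackrel{\pr}{\to}\zeta$ makes $\zeta$ bounded in probability as well. Choose $N$ so large that both $\pr\{\sup_{t\le L}|\zeta_T(t)|>N\}<\varepsilon$ for all $T$ large and $\pr\{\sup_{t\le L}|\zeta(t)|>N\}<\varepsilon$. On the complementary events all trajectories stay in $[-N,N]$, where $|g|\le M_N:=\sup_{|x|\le N}|g(x)|<\infty$.

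Next, Lusin's theorem produces, for each $\delta>0$, a continuous function $h$ on $\R$ with $|h|\le M_N$ such that $E_\delta:=\{x\in[-N,N]:h(x)\ne g(x)\}$ has Lebesgue measure less than $\delta$. Decompose
\[
\int_0^t g(\zeta_T(s))\,ds-\int_0^t g(\zeta(s))\,ds = J^{(1)}_T(t)+J^{(2)}_T(t)+J^{(3)}(t),
\]
where $J^{(1)}_T(t)=\int_0^t(g-h)(\zeta_T(s))\,ds$, $J^{(2)}_T(t)=\int_0^t[h(\zeta_T(s))-h(\zeta(s))]\,ds$, and $J^{(3)}(t)=\int_0^t(h-g)(\zeta(s))\,ds$. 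Since $h$ is continuous and bounded and $\zeta_T\stackrel{\pr}{\to}\zeta$ uniformly on $[0,L]$, bounded convergence gives $\sup_{t\le L}|J^{(2)}_T(t)|\stackrel{\pr}{\to}0$. On the event that $\zeta_T$ stays in $[-N,N]$, one has $|J^{(1)}_T(t)|\le 2M_N\int_0^L\chi_{E_\delta}(\zeta_T(s))\,ds$, whose expectation is at most $2M_NC_L\psi(\delta)$ by Lemma~\ref{lm1}, uniformly in $T$.

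The main obstacle is the third term $J^{(3)}$, for which no analogue of Lemma~\ref{lm1} is directly available, since $\zeta$ need not itself lie in the class $K(G_T)$. To transfer the estimate, I would fix an open set $U\supset E_\delta$ with $\lambda(U)<2\delta$ and write $\chi_U=\sup_n\phi_n$ as the monotone supremum of bounded continuous $\phi_n$ with $0\le\phi_n\le\chi_U$. For each $n$, $\phi_n$ is continuous and bounded and $\zeta_T\stackrel{\pr}{\to}\zeta$, so bounded convergence yields
\[
\M\int_0^L\phi_n(\zeta(s))\,ds = \lim_{T\to\infty}\M\int_0^L\phi_n(\zeta_T(s))\,ds \le \sup_T\M\int_0^L\chi_U(\zeta_T(s))\,ds \le C_L\psi(2\delta),
\]
the last inequality by Lemma~\ref{lm1}. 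Monotone convergence in $n$ then gives $\M\int_0^L\chi_U(\zeta(s))\,ds\le C_L\psi(2\delta)$, whence $\sup_{t\le L}\M|J^{(3)}(t)|\le 2M_NC_L\psi(2\delta)$. Markov's inequality now makes $J^{(1)}_T$ and $J^{(3)}$ arbitrarily small in probability for $\delta$ small (uniformly in $T$), while $J^{(2)}_T$ vanishes as $T\to\infty$ for that fixed $\delta$; combined with the localization step, this proves the lemma.
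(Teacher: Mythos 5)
Your proposal is correct and follows essentially the same route as the paper: localization using tightness of $\zeta_T$, a Luzin approximation of $g$ by a continuous function, and the occupation-time bound of Lemma~\ref{lm1} applied to $\zeta_T$, with the continuous part handled by bounded convergence. The only real difference is that where the paper simply asserts $\int_0^L\pr\{\zeta(s)\in B^{\delta}\}\,ds\le\overline{\lim}_{T\to\infty}\int_0^L\pr\{\zeta_T(s)\in B^{\delta}\}\,ds$ without comment, you justify this transfer of the bound to the limit process explicitly, by enlarging the exceptional set to an open neighborhood and approximating its indicator monotonically by continuous functions — a more careful rendering of the same step.
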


\begin{proof}
Let $\varphi_N(x)=1$ for $|x|\leq N$, $\varphi_N(x)=N+1-|x|$ for
$|x|\in\left[N,N+1\right]$, and $\varphi_N(x)=0$ for $|x|> N+1$.
Then, for all $T>0$ and $L>0$,
\begin{align*}
&\pr\Biggl\{\mathop{\sup}\limits_{0\leq t\leq L} \left|\int
_{0}^{t}\bigl[g\bigl(\zeta_T(s)\bigr)-g\bigl(\zeta_T(s)\bigr)\varphi_N\bigl(\zeta_T(s)\bigr)\bigr]\,
ds\right|{>}\,0\Biggr\}\leq\pr\Big\{\mathop{\sup}\limits_{0\leq t\leq
L} \big|\zeta_{T}(t)\big|\,{>}\,N\Big\},\\
&\pr\Biggl\{\mathop{\sup}\limits_{0\leq t\leq L} \left|\int
_{0}^{t}\bigl[g\bigl(\zeta(s)\bigr)-g\bigl(\zeta(s)\bigr)\varphi_N\bigl(\zeta(s)\bigr)\bigr]\,
ds\right|>0\Biggr\}\\
&\quad\leq\pr\Big\{\mathop{\sup}\limits_{0\leq t\leq
L} \big|\zeta(t)\big|>N\Big\}\leq\overline{\mathop{\lim}\limits_{T \to\infty}}\pr\Big\{\mathop
{\sup}\limits_{0\leq t\leq L} \big|\zeta_{T}(t)\big|>N\Big\}.
\end{align*}

According to Theorem \ref{th2}, convergence (\ref{o5}) holds for the process
$\zeta_{T}(t)$. So, to complete the proof of Lemma \ref{lm3}, we need
to establish that
\begin{equation}\label{l4}
\int_{0}^{L}\big|g\bigl(\zeta_T(s)\bigr)\varphi_N\bigl(\zeta_T(s)\bigr)-g\bigl(\zeta
(s)\bigr)\varphi_N\bigl(\zeta(s)\bigr)\big|\,ds\stackrel{\pr}{\to} 0
\end{equation}
as $T\to\infty$.

First, assume that the function $g(x)$ is continuous. Then
\[
g\bigl(\zeta_T(s)\bigr)\varphi_N\bigl(\zeta_T(s)\bigr)-g\bigl(\zeta(s)\bigr)\varphi_N\bigl(\zeta
(s)\bigr)\stackrel{\pr}{\to} 0
\]
as $T\to\infty$ for all $0\leq s\leq L$, and $\left|g(x)\varphi
_N(x)\right|\leq C_N$ for all $x$. Thus, by Lebesgue's dominated
convergence theorem we have convergence (\ref{l4}). Second, let the
function $g(x)$ be measurable and locally bounded. Then, using Luzin's
theorem, we conclude that, for any $\delta>0$, there exists a
continuous function $g^{\delta}(x)$ that coincides with $g(x)$ for
$x\notin B^{\delta}$, where $B^{\delta}\subset\left[-N-1,N+1\right
]$, and the Lebesgue measure satisfies the inequality $\lambda
(B^{\delta})< \delta$. Thus, for every $\delta>0$, convergence
(\ref{l4}) holds for the function $g^{\delta}(x)$. Since, for any
$\varepsilon>0$,
\begin{align*}
&\pr\left\{\int_{0}^{L}\left|g\bigl(\zeta_T(s)\bigr)\varphi_N\bigl(\zeta
_T(s)\bigr)-g^{\delta}\bigl(\zeta_T(s)\bigr)\varphi_N\bigl(\zeta_T(s)\bigr)\right|\,
ds>\varepsilon\right\}\\
&\quad\leq\frac{1}{\varepsilon}\M\int_{0}^{L}\left|g\bigl(\zeta
_T(s)\bigr)\varphi_N\bigl(\zeta_T(s)\bigr)-g^{\delta}\bigl(\zeta_T(s)\bigr)\varphi_N\bigl(\zeta
_T(s)\bigr)\right|\chi_{\{B^{\delta}\}}\bigl(\zeta_T(s)\bigr)\,ds\\
&\quad\leq\frac{C_N}{\varepsilon}\int_{0}^{L}\pr\left\{\zeta_T(s)\in
B^{\delta}\right\}\,ds,\\
&\pr\left\{\int_{0}^{L}\left|g\bigl(\zeta(s)\bigr)\varphi_N\bigl(\zeta
(s)\bigr)-g^{\delta}\bigl(\zeta(s)\bigr)\varphi_N\bigl(\zeta(s)\bigr)\right|\,ds>\varepsilon
\right\}\\
&\quad\leq\frac{C_N}{\varepsilon}\int_{0}^{L}\pr\left\{\zeta(s)\in
B^{\delta}\right\}\,ds\leq\frac{C_N}{\varepsilon}\;\overline{\mathop
{\lim}\limits_{T \to\infty}}\;\int_{0}^{L}\pr\left\{\zeta
_T(s)\in B^{\delta}\right\}\,ds,
\end{align*}
taking into account Lemma \ref{lm1}, we conclude that convergence (\ref
{l4}) holds for such a function $g(x)$ as well.\vadjust{\eject}
\end{proof}

\begin{lem}\label{lm4}
Let $\xi_T$ be a solution of Eq.~\eqref{o1} from the class $K\left(G_T
\right)$, and let $\zeta_T (t)=G_T(\xi_T(t))\stackrel{\pr}{\to}\zeta
(t)$ and $\eta_T(t)=\int_{0}^{t}G_T^{\prime}(\xi_T(s))\,
dW_T(s)\stackrel{\pr}{\to}\eta(t)$ as $T\to\infty$. Then, for
measurable locally bounded functions $g(x)$, we have the convergence
\[
\sup_{0\leq t\leq L}\left|\int_{0}^{t} g\bigl(\zeta_T(s)\bigr)\,d\eta
_T(s)-\int_{0}^{t} g\bigl(\zeta(s)\bigr)\,d\eta(s)\right|\stackrel{\pr}{\to
} 0
\]
as $T\to\infty$ for any constant $L>0$.
\end{lem}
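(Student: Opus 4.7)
The plan is to mirror the proof of Lemma \ref{lm3}, with stochastic integrals replacing Lebesgue ones, in three stages: truncation, the continuous case, and Luzin approximation. First, using the cutoff $\varphi_N$ from the proof of Lemma \ref{lm3}, I would reduce to $g\varphi_N$. The tail contributions $\int_0^t g(\zeta_T(s))(1-\varphi_N(\zeta_T(s)))\,d\eta_T(s)$ vanish on $\{\sup_{t\leq L}|\zeta_T(t)|\leq N\}$, whose complement has probability tending to $0$ as $N\to\infty$ uniformly in $T$ by \eqref{o5}; the analogous bound for the limit $\zeta$ follows by passing to the limit in probability.

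For bounded continuous $g$ with compact support, I would split
\[
\int_0^t g(\zeta_T)\,d\eta_T-\int_0^t g(\zeta)\,d\eta=\int_0^t[g(\zeta_T)-g(\zeta)]\,d\eta_T+\int_0^t g(\zeta)\,d(\eta_T-\eta).
\]
The first summand vanishes in probability by the It\^o isometry, since $g(\zeta_T)\to g(\zeta)$ uniformly in probability and $\langle\eta_T\rangle(L)$ is bounded in probability by \eqref{o6}. The second summand requires passing $\eta_T\to\eta$ through the stochastic integrator; I would approximate by Riemann sums $\sum_k g(\zeta(t_k))(\eta_T(t\wedge t_{k+1})-\eta_T(t\wedge t_k))$ on a fine partition, using the equicontinuity of $\eta_T$ and $\eta$ from \eqref{o5} together with the uniform continuity of $s\mapsto g(\zeta(s))$ to control the approximation error uniformly in $T$, and then invoking the uniform convergence $\eta_T\to\eta$ to pass to the limit in the finite sum.

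For measurable locally bounded $g$, Luzin's theorem gives a continuous $g^\delta$ coinciding with $g$ outside a set $B^\delta\subset[-N-1,N+1]$ of Lebesgue measure less than $\delta$. The It\^o isometry yields
\[
\M\Biggl(\int_0^L\bigl(g(\zeta_T(s))-g^\delta(\zeta_T(s))\bigr)\chi_{\{|\zeta_T(s)|\leq N+1\}}\,d\eta_T(s)\Biggr)^2\leq C_N\int_0^L\pr\bigl\{\zeta_T(s)\in B^\delta\bigr\}\,ds,
\]
where condition $(A_1)$ provides the bound $(G_T'(\xi_T(s)))^2\leq C(1+(N+1)^2)$ on $\{|\zeta_T(s)|\leq N+1\}$. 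Lemma \ref{lm1} then controls the right-hand side by $C_{N,L}\,\psi(\delta)$, uniformly in $T$; the corresponding estimate for the limit integral $\int g(\zeta)\,d\eta$ follows by the same argument with $\pr\{\zeta(s)\in B^\delta\}$ dominated by $\overline{\lim}_T\pr\{\zeta_T(s)\in B^\delta\}$, exactly as in the proof of Lemma \ref{lm3}. Letting first $T\to\infty$ and then $\delta\to0$ finishes the reduction.

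\textbf{Main obstacle.} The critical step is passing to the limit in $\int_0^t g(\zeta)\,d(\eta_T-\eta)$ during the continuous-integrand stage: since $\eta_T-\eta$ is not of bounded variation, no pathwise Stieltjes argument applies. The resolution relies on a Riemann-sum approximation that works uniformly in $T$, and the uniformity is precisely what the tightness and equicontinuity estimates \eqref{o5}--\eqref{o6} for the martingale family $\eta_T$ provide.
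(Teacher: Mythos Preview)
Your three-stage outline---truncation via $\varphi_N$, the continuous case, then Luzin approximation backed by Lemma~\ref{lm1} and the bound $(G_T'(x))^2\leq C(1+G_T(x)^2)$ from $(A_1)$---matches the paper's proof exactly; the paper simply defers the continuous step to Skorokhod's book and records the same inequality $\int_0^L[\varphi_N(\zeta_T)]^2\,d\langle\eta_T\rangle\leq C_N L$ that you extract from $(A_1)$.

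One caution on your continuous-case argument: the splitting
\[
\int_0^t g(\zeta_T)\,d\eta_T-\int_0^t g(\zeta)\,d\eta=\int_0^t[g(\zeta_T)-g(\zeta)]\,d\eta_T+\int_0^t g(\zeta)\,d(\eta_T-\eta)
\]
introduces the integral $\int_0^t g(\zeta(s))\,d\eta_T(s)$, and for this to be an It\^o integral you need $g(\zeta(\cdot))$ adapted to a filtration with respect to which $\eta_T$ is still a martingale---not automatic, since $\zeta$ is the limit process and need not sit inside the $W_T$-filtration. The cleaner route (and what the Skorokhod reference actually does) bypasses the splitting entirely: approximate $\int_0^t g(\zeta_T)\,d\eta_T$ and $\int_0^t g(\zeta)\,d\eta$ each by their own Riemann sums, control both approximation errors \emph{uniformly in $T$} using the equicontinuity of $\zeta_T$ from \eqref{o5}--\eqref{o6} together with the $\langle\eta_T\rangle$-bound, and then pass to the limit in the finite sums via $g(\zeta_T(t_k))\to g(\zeta(t_k))$ and $\eta_T(t_{k+1})-\eta_T(t_k)\to\eta(t_{k+1})-\eta(t_k)$. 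Your ``main obstacle'' and the ingredients you list for resolving it are exactly right; only the intermediate decomposition is superfluous and mildly problematic.
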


\begin{proof}
Similarly to the proof of Lemma \ref{lm3}, it suffices to obtain an
analogue of convergence
(\ref{l4}), that is, to get that, for any $N>0$ and $L>0$,
\begin{equation}\label{l5}
\sup_{0\leq t\leq L}\left|\int_{0}^{t}g\bigl(\zeta_T(s)\bigr)\varphi
_N\bigl(\zeta_T(s)\bigr)\,d\eta_T(s)-\int_{0}^{t}g\bigl(\zeta(s)\bigr)\varphi_N\bigl(\zeta
(s)\bigr)\,d\eta(s)\right|\stackrel{\pr}{\to} 0
\end{equation}
as $T\to\infty$, where $\varphi_N(x)$ is defined in the proof of Lemma
\ref{lm3}. The proof of convergence (\ref{l5}) for a continuous
function $g(x)$ is similar to that of the corresponding theorem in \cite
{book6}, Chapter~2, \S6. The explicit form of the quadratic
characteristic $\langle\eta_T\rangle(t)$ of the martingale $\eta_T
(t)$ and condition $(A_1)$ imply the inequality
\[
\int_{0}^{L}\bigl[\varphi_N\bigl(\zeta_T(t)\bigr)\bigr]^2\,d\langle\eta
_T\rangle(t)\leq C_N L,
\]
which is used for the proof of convergence (\ref{l5}). The extension of
such a convergence to the class of measurable locally bounded functions
is based on Lemma~\ref{lm1} and is provided similarly to the proof of
Lemma \ref{lm3}.
\end{proof}

\begin{lem}\label{lm5}
Let $\xi_T$ be a solution of Eq.~\eqref{o1} belonging to the class
$K\left(G_T \right)$, and let the stochastic process $\left(\zeta_T
(t),\eta_T (t)\right)$, with $\zeta_T (t)=G_T(\xi_T(t)) $ and $\eta
_T(t)=\int_{0}^{t}G_T^{\prime}(\xi_T(s))\,dW_T(s)$ be
stochastically equivalent to the process $(\tilde{\zeta}_T (t),
\tilde{\eta}_T (t))$. Then the process
\[
\int_{0}^{t}g\bigl(\zeta_T(s)\bigr)\,ds+\int_{0}^{t}q\bigl(\zeta_T(s)\bigr)\,
d\eta_T(s),
\]
where $g(x)$ and $q(x)$ are measurable locally bounded functions, is
stochastically equivalent to the process
\[
\int_{0}^{t}g\bigl(\tilde{\zeta}_T(s)\bigr)\,ds+\int_{0}^{t}q\bigl(\tilde
{\zeta}_T(s)\bigr)\,d\tilde{\eta}_T(s).
\]
\end{lem}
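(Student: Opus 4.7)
The plan is to reduce the statement to the case of continuous $g$ and $q$ via a Luzin-type approximation, and for continuous integrands to exploit the fact that the Lebesgue and It\^o integrals can be realized as limits in probability of Riemann sums, each of which depends only on the values of $(\zeta_T,\eta_T)$ at finitely many deterministic times. Since stochastic equivalence preserves finite-dimensional distributions, the Riemann sums formed from $(\zeta_T,\eta_T)$ and from $(\tilde\zeta_T,\tilde\eta_T)$ have identical distributions, and passing to the limit yields the equivalence of the integrals.

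First, I would truncate with the cutoff function $\varphi_N$ from the proof of Lemma \ref{lm3}, so that both $g\varphi_N$ and $q\varphi_N$ are bounded. Using (\ref{o5}) for the process $\zeta_T$ (and, by stochastic equivalence, for $\tilde\zeta_T$ as well), the error introduced by replacing $g$ with $g\varphi_N$ and $q$ with $q\varphi_N$ is negligible as $N\to\infty$ uniformly in $T$. For bounded continuous $g$ and $q$, I would introduce a partition $0=s_0<s_1<\cdots<s_n=t$ with mesh tending to $0$ and form the Riemann sums
\[
S^n_T(t)=\sum_{k=0}^{n-1} g\bigl(\zeta_T(s_k)\bigr)(s_{k+1}-s_k)+\sum_{k=0}^{n-1} q\bigl(\zeta_T(s_k)\bigr)\bigl(\eta_T(s_{k+1})-\eta_T(s_k)\bigr),
\]
and analogously $\tilde S^n_T(t)$ built from $(\tilde\zeta_T,\tilde\eta_T)$. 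By continuity of the sample paths of $\zeta_T$ (inherited from $\xi_T$ and the smoothness of $G_T$) and the standard construction of the It\^o integral against the continuous martingale $\eta_T$ with quadratic characteristic controlled by condition $(A_1)$, $S^n_T(t)$ converges in probability to $\int_0^t g(\zeta_T(s))\,ds+\int_0^t q(\zeta_T(s))\,d\eta_T(s)$, and $\tilde S^n_T(t)$ converges to the analogous expression built from $(\tilde\zeta_T,\tilde\eta_T)$. Because $S^n_T(t)$ and $\tilde S^n_T(t)$ are identical deterministic Borel functions of finitely many coordinates of $(\zeta_T,\eta_T)$ and $(\tilde\zeta_T,\tilde\eta_T)$ respectively, the assumed stochastic equivalence gives $S^n_T(t)\stackrel{d}{=}\tilde S^n_T(t)$, and the same identity in distribution is preserved in the limit. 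Joint distributions at several times $t_1,\dots,t_m$ follow from the same argument applied to vectors of Riemann sums.

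Finally I would remove continuity. Given a measurable locally bounded $g$ (and similarly $q$), Luzin's theorem provides, for every $\delta>0$, continuous $g^\delta,q^\delta$ agreeing with $g,q$ outside a set $B^\delta\subset[-N-1,N+1]$ of Lebesgue measure less than $\delta$. The Lebesgue-integral error is bounded as in the proof of Lemma \ref{lm3} using Lemma \ref{lm1}, and the It\^o-integral error is bounded via Doob's inequality and the quadratic-variation estimate
\[
\M\Biggl(\int_{0}^{L}\bigl[q-q^\delta\bigr]^2\bigl(\zeta_T(s)\bigr)\varphi_N^2\bigl(\zeta_T(s)\bigr)\,d\langle\eta_T\rangle(s)\Biggr)\le C_N\int_{0}^{L}\pr\bigl\{\zeta_T(s)\in B^\delta\bigr\}\,ds,
\]
which tends to $0$ as $\delta\to 0$ by Lemma \ref{lm1}, uniformly in $T$. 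The same bound holds for the tilded process by stochastic equivalence. Thus the continuous-case equivalence already established transfers to measurable locally bounded $g,q$.

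The main obstacle is the It\^o-integral part: unlike a Lebesgue integral, a stochastic integral is not literally a pointwise functional of the paths, so the transfer from finite-dimensional equivalence is not automatic. The key point I would emphasize is that once the integrand is continuous and the integrator is a continuous martingale with controlled quadratic characteristic, the It\^o integral coincides with a probability limit of Riemann sums, and this is what allows the equivalence of finite-dimensional distributions of $(\zeta_T,\eta_T)$ to propagate through the stochastic integral.
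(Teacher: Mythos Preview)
Your argument is correct and follows the standard route for transferring stochastic integrals across processes with identical finite-dimensional distributions: Riemann-sum approximation for continuous integrands, then a Luzin/occupation-time approximation using Lemma~\ref{lm1} to pass to measurable locally bounded integrands. The paper itself gives no argument at all; it simply says the proof is the same as that of Theorem~2 in Krylov~\cite{paper9}. What you have written is essentially a self-contained version of that cited result, adapted to the present setting.

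Two minor points you should make explicit. First, the integral $\int_0^t q(\tilde\zeta_T(s))\,d\tilde\eta_T(s)$ must be well defined: you need that $\tilde\eta_T$ is a continuous square-integrable martingale with respect to the natural filtration of $(\tilde\zeta_T,\tilde\eta_T)$, with quadratic variation equal in law to $\langle\eta_T\rangle$. Both facts are consequences of the stochastic equivalence (the martingale property in the natural filtration and the quadratic variation are determined by finite-dimensional distributions), but a sentence saying so belongs in the proof. Second, $T$ is fixed in this lemma, so your phrase ``uniformly in $T$'' in the truncation step is unnecessary; all you need is that $\sup_{0\le s\le L}|\zeta_T(s)|<\infty$ a.s., which follows from path continuity.
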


\begin{proof}
The proof is the same as that \xch{of}{of the} Theorem 2 from \cite{paper9}.
\end{proof}

\end{document}